\newtheorem{theorem}{Theorem}[section]
\newtheorem{lemma}[theorem]{Lemma}
\newtheorem{corollary}[theorem]{Corollary}
\newtheorem{definition}[theorem]{Definition}
\newtheorem{question}[theorem]{Question}
\theoremstyle{remark}
\newtheorem{remark}[theorem]{Remark}
\numberwithin{equation}{section}
\title[On Nearly Gorenstein Simplicial  Semigroup Algebras]
{On Nearly Gorenstein Simplicial  Semigroup Algebras}
\author{
		Pranjal Srivastava
}
\date{}
\address{\small \rm  Discipline of Mathematics, IISER Bhopal, Madhya Pradesh, India.} 
\email{pranjal.srivastava194@gmail.com}\thanks{The author thanks IISER Bhopal for the Institute post-doc fellowship IISERB/DoFA/PDF/2023/80.}
\date{}
\subjclass[2020]{Primary 13H10, 13P10, 20M25.}
\keywords{Affine Semigroups, Gr\"{o}bner bases, Associated graded rings, Betti numbers, 
	Cohen-Macaulay.}
\begin{document}
	
	\begin{abstract}
	In this paper, we study the nearly Gorenstein projective closure of numerical semigroups. We also studied the nealy Gorenstein property of associated graded ring of simplicial affine semigroups. In addition, in case of gluing of numerical semigroups, we answer the question posed by Herzog-Hibi-Stamate (Semigroup Forum 103:550–566, 2021).	
	\end{abstract}

	\maketitle

	\section{Introduction}
	
	Let $\mathbb{N}$ denotes the set of non negative integers and $\mathbb{K}$ 
	denotes a field. 
	Let $(A,\mathfrak{a},\mathbb{K})$ be a positively graded $\mathbb{K}$-algebra which is Cohen-Macaulay and possesses a canonical module $\omega_A$. We recall that if $N$ is any $A-$module, its trace is the ideal $\mathrm{tr}_A(N)=\sum_{\phi \in \mathrm{Hom}_A(N,A)}\phi(N)$ in $A$.  In \cite{Trace}, the ring $A$ is called nearly Gorenstein when $\mathfrak{a} \subseteq\mathrm{tr}_{A}(\omega_{A}) $ and the residue of $A$ is defined as the length of a module $A/\mathrm{tr}_A(\omega_{A})$.
	The trace ideal of $\omega_A$ has been used to define the class of nearly Gorenstein rings. It is well known that $\mathrm{tr}(\omega_A)$ defines the non-Gorenstein locus of $A$ (See \cite[Lemma 2.1]{Trace}). In this paper, we study the trace ideal of canonical module of some simplicial affine semigroups.
	
	Numerical semigroup rings are one-dimensional domains and hence Cohen-Macaulay; these are 
	the coordinate rings of affine monomial curves. A 
	lot of interesting studies have been undertaken 
	by several authors from the viewpoints of singularities, homology and also from purely 
	semigroup theoretic aspects. In \cite{Residue}, Herzog et al. studied the trace of the canonical ideal of numerical semigroup rings and characterized the nearly Gorenstein numerical semigroup using the residue of numerical semigroups.  Moscariello and Strazzanti also studies the nearly Gorenstein affine monimial curves (See \cite{Nearly-Almost}). 
	The projective closure of a numerical semigroup ring is defined by an 
	affine semigroup in $\mathbb{N}^2$ (See Section \ref{PCN}). The Cohen-Macaulay property and many other properties of 
	numerical semigroup rings are not preserved under the operation of projective closure. 
	In this paper, we explore the trace of the canonical ideal of projective closure of numerical semigroup rings. We study the nearly Gorenstein property of projective closure of numerical semigroup whenever numerical semigroups are nearly Gorenstein using Gr\"{o}bner basis of the defining ideal of projective closure of numerical semigroup rings. Recently, Miyashita studied the nearly Gorenstein projective monomial curve for small co-dimensions (See \cite{Miyashita}).

	For a numerical semigroup $\Gamma$, we prove that projective closure $\mathbb{K}[\overline{\Gamma}]$ of numerical semigroup ring $\mathbb{K}[\Gamma]$ is nearly Gorenstein if and only if $\mathbb{K}[\Gamma]$ is nearly Gorenstein under some suitable Gr\"{o}bner basis condition on defining ideal of $\mathbb{K}[\Gamma]$. An important class 
	of numerical semigroups supporting this result is the one defined by an arithmetic sequence; in other words, if $\Gamma$ is a numerical semigroup, minimally generated by an arithmetic sequence, then $\mathbb{K}[\overline{\Gamma}]$ is nearly Gorenstein (See Corollary \ref{BPAS}).
	
	Affine simplicial semigroups provide a natural generalization of numerical
	semigroups, whose theory has been developed mainly in connection to the study of curve singularities. Many authors 
	have studied the properties of the affine semigroup ring $\mathbb{K}[\Gamma]$ from 
	the properties of the affine semigroup $\Gamma$; see 
	\cite{Bruns-Herzog},\cite{Affine}. 
	In this paper, we study the nearly Gorenstein property of the associated graded ring of simplicial affine semigroups. We prove that $\mathrm{gr}_{\mathfrak{m}}(\mathbb{K}[\Gamma])$ is nearly Gorenstein if and only $\mathbb{K}[\Gamma]$ is nearly Gorenstein (See Theorem \ref{CM}).
	
	\smallskip
	
	Herzog et al. (See \cite{Residue}) defined the canonical trace ideal and residue for numerical semigroup rings. They defined the residue of numerical semigroup $\Gamma$, which measures how far is $\Gamma$ from being symmetric. They asked the following questions. 
	
	\begin{question}\label{Ques}(\cite[Question 2.3]{Residue})
		Let $A=\mathbb{K}[\Gamma]$, is it true that 
		\begin{align}
			l(A/\mathrm{tr}_A(\omega_{A}))\leq 
			g(\Gamma)-n(\Gamma)?
		\end{align}
		where $g(\Gamma)=|\mathbb{N}\setminus \Gamma|, n(\Gamma)=|\{s \in \Gamma : s < \mathrm{F}(\Gamma)\}|$ denote
		the number of gaps and the number of non-gaps, respectively, and $\mathrm{F}(\Gamma)$
		denotes the Frobenius number of $\Gamma$, that is, the largest integer in $\mathbb{N}\setminus \Gamma$.
	\end{question}
	
	They show that Question \ref{Ques} holds for $3$-generated numerical semigroup  (See \cite[Proposition 3.2]{Residue}). Recently, Herzog and Kumashiro in \cite{Upper bound} give the answer to this question for all numerical semigroups $\Gamma $ whose $\mathrm{type}(\Gamma)\leq 3$. They also give one example where this question has a negative answer (See \cite[Example 2.5]{Upper bound}). In this paper, we discuss this question for gluing numerical semigroups (See Definition \ref{Gluing-Def}).  	
	We show that Question \ref{Ques} holds for the gluing of numerical semigroups provided glued numerical semigroups satisfy  Question \ref{Ques}. Rosales \cite{JCR} introduced 
	the concept of gluing of numerical semigroups, which was 
	motivated by Delorme's work on 
	complete intersection numerical semigroups. This technique 
	was used extensively to create many examples 
	of set-theoretic complete intersection and ideal-theoretic 
	complete intersection of affine and projective varieties. 
	Feza Arslan et al. \cite{GHM} introduced the concept of 
	\textit{nice gluing} to 
	give infinitely many families of $1$-dimensional local 
	rings with non-Cohen 
	Macaulay tangent cone and non-decreasing Hilbert function. An answer to the Question \ref{Ques} in case of gluing would help us create 
	a family of numerical semigroups in any embedding dimensions in which Question \ref{Ques} holds.
	
	\c{S}ahin in \cite{Sahin} define an operation called ``lifting of monomial curves" (See Subsection \ref{lift} for definition) and use for spreading
	a special property of a monomial curve within an infinite family of examples. In this paper, we prove that $\Gamma$ satisfies inequality (1.1), then its $k$-lifting $\Gamma_k$ also satisfy inequality (1.1). 
	

	\section{Nearly Gorenstein property of projective closure of numerical semigroups}\label{PCN}
	
		Let us  define the projective closure of numerical semigroup rings. 
	Let 
	$e\geq 3$ and $\mathbf{\underline n} = (n_{1}, \ldots, n_{e})$ be a sequence of 
	$e$ distinct positive integers with $\gcd(\mathbf{\underline n})=1$. Let us assume 
	that the numbers $n_{1}, \ldots, n_{e}$ generate the numerical semigroup 
	$\Gamma(n_1,\ldots, n_e) = \langle n_{1}, \ldots , n_{e} \rangle = 
	\lbrace\sum_{j=1}^{e}z_{j}n_{j}\mid z_{j}\in \mathbb{N}\rbrace$ 
	minimally, that is, if $n_i=\sum_{j=1}^{e}z_{j}n_{j}$ for some non-negative 
	integers $z_{j}$, then $z_{j}=0$ for all $j\neq i$ and $z_{i}=1$. We often write 
	$\Gamma$ in place of $\Gamma(n_1,\ldots, n_e)$, when there is no confusion regarding 
	the defining sequence $n_{1}, \ldots, n_{e}$. Let 
	$\eta: R = \mathbb{K}[x_1,\,\ldots,\, x_e]\rightarrow \mathbb{K}[t]$ be the mapping defined by 
	$\eta(x_i)=t^{n_i},\,1\leq i\leq e$. The ideal $\ker (\eta) =I(\Gamma)$  is called 
	the defining ideal of $\Gamma(n_1,\ldots, n_e)$ and it defines the 
	affine monomial curve 
	$\{(u^{n_{1}},\ldots , u^{n_{e}})\in \mathbb{A}^{e}_{\mathbb{K}}\mid u\in \mathbb{K}\} =: 
	C(n_{1},\ldots,n_{e})$ (or simply $C(\Gamma)$). We write 
	$\mathbb{K}[x_{1},\ldots,x_{e}]/I(\Gamma) =: \mathbb{K}[\Gamma(n_1,\ldots, n_e )]$ 
	(or simply $\mathbb{K}[\Gamma]$), which is called the semigroup ring for 
	the semigroup $\Gamma(n_1,\ldots, n_e )$. It is known that $I(\Gamma)$ is 
	generated by the binomials $x^{a}-x^{b}$, where $a$ and $b$ are $e$-tuples of non-negative 
	integers with $\eta(x^{a})=\eta(x^{b})$.

	
	Let $n_{e}>n_{i}$ for all $i<e$, and $n_{0}=0$. We define the 
	semigroup $\overline{\Gamma(n_{1},\ldots n_{e})}  
	= \langle \{(n_{i},n_{e}-n_{i})\mid 0\leq i\leq e\} \rangle  = \lbrace \sum_{i=0}^{e}z_{i}(n_{i},n_{e}-n_{i}) \mid 
	z_{i}\in\mathbb{N}\rbrace$, often written as $\overline{\Gamma}$. 
	Let $\eta^{h}:S=\mathbb{K}[x_{0},\ldots,x_{e}]\longrightarrow \mathbb{K}[s,t]$ 
	be the $\mathbb{K}$-algebra map defined as $\eta^{h}(x_{i})=t^{n_{i}}s^{n_{e}-n_{i}}, 0\leq i \leq e$  and 
	$ \ker(\eta^{h}) = \overline{I(\Gamma)}$. The homogenization of 
	$I(\Gamma)$ with respect to the variable $x_{0}$ is 
	$\overline{I(\Gamma)}$, which defines 
	$\{[(v^{n_{e}}: v^{n_{e}-n_{1}}u^{n_{1}}: \cdots: u^{n_{e}})]\in\mathbb{P}^{e}_{\mathbb{K}}\mid u, v\in \mathbb{K}\} =: \overline{C(n_{1},\ldots,n_{e})}$ (or simply $\overline{C(\Gamma)}$),  
	the projective closure of the affine monomial curve $C(n_{1},\ldots,n_{e})$. 
	The $\mathbb{K}$-algebra 
	$\mathbb{K}[x_{0},\ldots,x_{e}]/\overline{I(\Gamma)} =: 
	\mathbb{K}[\overline{\Gamma(n_1,\ldots, n_e )}]$ (or simply $\mathbb{K}[\overline{\Gamma}]$) 
	denotes the coordinate ring. It can be proved easily that 
	$\overline{C(n_{1},\ldots,n_{e})}$ is a projective curve, which is said to be 
	arithmetically Cohen-Macaulay if the vanishing ideal $\overline{I(\Gamma)}$ 
	is a Cohen-Macaulay ideal.

	\begin{lemma}\label{Can-Exist}
		Let $(R,\mathfrak{m})$ be a Cohen-Macaulay graded ring with canonical module $\omega_{R}$. Let $\phi:(R,\mathfrak{m}) \rightarrow (S,\mathfrak{n})$ be a ring homomorphism of Cohen-Macaulay graded rings satisfying
		\begin{enumerate}[(i)]
			\item $\phi(R_i)\subset S_i$ for all $i \in \mathbb{Z}$,
			\item  $\phi(\mathfrak{m})\subset \mathfrak{n}$,
			\item $S$ is a finite graded $R$-module.
		\end{enumerate}
		Then $\omega_{S} \cong \mathrm{Ext}^{t}_{R}(S,\omega_{R})$, where $t=\mathrm{dim}(R)-\mathrm{dim}(S)$.
	\end{lemma}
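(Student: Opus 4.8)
The plan is to derive this from the standard behaviour of the canonical module under finite ring maps -- the graded analogue of the base-change formula for canonical modules (cf. \cite{Bruns-Herzog}) -- reconstructed here via a change-of-rings spectral sequence. Throughout I work in the standing graded setting: $R$ and $S$ are finitely generated $\mathbb{N}$-graded $\mathbb{K}$-algebras with $R_0=S_0=\mathbb{K}$, so $\mathfrak{m}=R_+$ and $\mathfrak{n}=S_+$ are the unique graded maximal ideals, and conditions (i) and (ii) say exactly that $\phi$ is a degree-preserving $\mathbb{K}$-algebra homomorphism with $\phi(\mathfrak{m})\subseteq\mathfrak{n}$, so that $S$ is a $\mathbb{Z}$-graded $R$-module, finitely generated by (iii).

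First I would fix the numerics. Since $1_S\in S$, the annihilator $\mathrm{ann}_R(S)$ equals $\ker\phi$, hence $\dim_R S=\dim(R/\ker\phi)=\dim(\mathrm{im}\,\phi)$, and as $S$ is a module-finite (so integral) extension of the subring $\mathrm{im}\,\phi$, this equals $\dim S$. Writing $d=\dim R$ and $t'=\dim S$, we get $\dim_R S=t'$ and $t=d-t'$; moreover $S$ is a Cohen--Macaulay $R$-module of dimension $t'$, since depth and dimension over $R$, over $\mathrm{im}\,\phi$, and over $S$ all agree (because $\sqrt{\mathfrak m S}=\mathfrak n$, as $S/\mathfrak m S$ is a finite-dimensional graded $\mathbb{K}$-algebra). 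In particular $\mathrm{Ext}^i_R(S,\omega_R)=0$ for $i\neq t$.

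Next I would reduce to a polynomial ring. Choose a graded polynomial ring $A=\mathbb{K}[y_1,\dots,y_n]$ and a graded surjection $A\twoheadrightarrow R$; composing with $\phi$ makes $S$ a finite graded $A$-module with $\dim_A R=d$ and $\dim_A S=t'$. For the Gorenstein ring $A$ the canonical module is a rank-one graded free module $\omega_A$, and the presentation of the canonical module of a Cohen--Macaulay graded quotient of a Gorenstein graded ring gives, as graded modules, $\mathrm{Ext}^q_A(R,\omega_A)=\omega_R$ for $q=n-d$ and $0$ otherwise, as well as $\omega_S\cong\mathrm{Ext}^{\,n-t'}_A(S,\omega_A)$. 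Using the base-change adjunction $\mathrm{Hom}_A(S,-)\cong\mathrm{Hom}_R\!\big(S,\mathrm{Hom}_A(R,-)\big)$ together with the fact that $\mathrm{Hom}_A(R,-)$ carries injective $A$-modules to injective $R$-modules, one obtains a graded Grothendieck spectral sequence
\[
E_2^{p,q}=\mathrm{Ext}^p_R\!\big(S,\mathrm{Ext}^q_A(R,\omega_A)\big)\ \Longrightarrow\ \mathrm{Ext}^{p+q}_A(S,\omega_A).
\]
Since $E_2^{p,q}=0$ for $q\neq n-d$, it collapses to $S$-linear graded isomorphisms $\mathrm{Ext}^p_R(S,\omega_R)\cong\mathrm{Ext}^{p+n-d}_A(S,\omega_A)$ for all $p$; taking $p=t=d-t'$ yields $\mathrm{Ext}^t_R(S,\omega_R)\cong\mathrm{Ext}^{\,n-t'}_A(S,\omega_A)\cong\omega_S$, and uniqueness of the graded canonical module finishes the proof.

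The routine-but-essential points are that every isomorphism above is $S$-linear and degree-preserving, so that the $S$-module $\mathrm{Ext}^t_R(S,\omega_R)$ is genuinely $\omega_S$ with its grading, and that hypotheses (i)--(iii) are precisely what is needed to run the graded form of each cited fact. The main obstacle I expect is bookkeeping rather than conceptual: verifying the spectral sequence and its collapse in the graded category, and checking that the adjunction-and-injectivity input is legitimate even though $\phi$ need not be injective. If one prefers to sidestep the spectral sequence, an alternative is to combine graded local duality over $R$ with the identity $H^i_{\mathfrak m}(S)=H^i_{\mathfrak n}(S)$ (valid since $\phi(\mathfrak m)\subseteq\mathfrak n$ forces $\sqrt{\mathfrak m S}=\mathfrak n$) and graded Matlis duality, but this demands the same amount of grading care.
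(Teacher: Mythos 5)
The paper's own ``proof'' of this lemma is nothing more than a citation of \cite[Proposition 3.6.12]{Bruns-Herzog}, so what you have written is in effect a reconstruction of that textbook result rather than an alternative to anything in the paper. Your outline is the standard one and most of it is sound: the numerics ($\mathrm{ann}_R(S)=\ker\phi$, $\dim_R S=\dim S$, $S$ Cohen--Macaulay as an $R$-module because $\sqrt{\mathfrak{m}S}=\mathfrak{n}$), the presentation $A\twoheadrightarrow R$ with $A$ a graded polynomial ring, the adjunction $\mathrm{Hom}_A(S,-)\cong\mathrm{Hom}_R(S,\mathrm{Hom}_A(R,-))$ with preservation of injectives, and the collapse of $E_2^{p,q}=\mathrm{Ext}^p_R(S,\mathrm{Ext}^q_A(R,\omega_A))\Rightarrow\mathrm{Ext}^{p+q}_A(S,\omega_A)$ using that $\mathrm{Ext}^q_A(R,\omega_A)$ vanishes for $q\neq n-d$ and equals $\omega_R$ at $q=n-d$ (legitimate, since $A\to R$ \emph{is} surjective).

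The one step that does not stand as written is the input $\omega_S\cong\mathrm{Ext}^{\,n-t'}_A(S,\omega_A)$. You justify it by ``the presentation of the canonical module of a Cohen--Macaulay graded quotient of a Gorenstein graded ring,'' but $S$ is not a quotient of $A$: the composite $A\twoheadrightarrow R\to S$ is only module-finite, since $\phi$ need not be surjective. As stated, this identification is precisely the lemma you are proving, specialized to the Gorenstein source $A\to S$, so invoking it in this form is circular. The gap is repairable, and you essentially name the repair yourself in your closing sentence: graded local duality over the polynomial ring $A$ gives $\mathrm{Ext}^{\,n-i}_A(S,\omega_A)\cong\mathrm{Hom}_{\mathbb{K}}(H^i_{A_+}(S),\mathbb{K})$ for any finite graded $A$-module, and $\sqrt{A_+S}=\mathfrak{n}$ gives $H^i_{A_+}(S)=H^i_{\mathfrak{n}}(S)$; taking $i=t'=\dim S$ and using the characterization of the graded canonical module as $\omega_S\cong\mathrm{Hom}_{\mathbb{K}}(H^{t'}_{\mathfrak{n}}(S),\mathbb{K})$ yields the needed isomorphism. (The same duality taken over $R$ instead of $A$ gives $\mathrm{Ext}^t_R(S,\omega_R)^\vee\cong H^{d-t}_{\mathfrak{m}}(S)=H^{t'}_{\mathfrak{n}}(S)$ directly and makes the spectral sequence unnecessary.) With that substitution your argument is a complete proof of the cited statement.
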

	\begin{proof}
		See Proposition 3.6.12 in \cite{Bruns-Herzog}
	\end{proof}

	\begin{remark}
		Note that a graded polynomial ring $R[x_0]=\mathbb{K}[x_0,\dots,x_e]$ over a field $\mathbb{K}$ is Gorenstein, hence \cite[Proposition 3.6.11]{Bruns-Herzog} $\omega_{R[x_0]}\cong R[x_0]$. Suppose  $\mathbb{K}[\overline{\Gamma}]$ is Cohen-Macaulay then by Lemma \ref{Can-Exist} $\mathbb{K}[\overline{\Gamma}]$ admits a canonical-module $\omega_{\mathbb{K}[\overline{\Gamma}]}$ and $\omega_{\mathbb{K}[\overline{\Gamma}]}\cong \mathrm{Ext}_{R[x_0]}^{e-1}(R[x_0]/I^h,R[x_0])$.   
	\end{remark}

	\begin{theorem}\label{BSP}
		Let $\Gamma$ be a numerical semigroup, such that $\mathbb{K}[\overline{\Gamma}]$ is arithmetically Cohen-Macaulay. 
		If there exists a Gr\"{o}bner basis $G$ of the defining ideal $I(\Gamma)$ of $\mathbb{K}[\Gamma]$, 
		with respect to the degree reverse lexicographic ordering $x_{i}>x_{e}$, for $i=1,\dots,e-1$, such that $x_{e}$ belongs to 
		the support of all non-homogeneous elements of $G$. Then $\mathbb{K}[\Gamma]$ is nearly-Gorenstein if and only if 
		$\mathbb{K}[\overline{\Gamma}]$ is nearly-Gorenstein.															
	\end{theorem}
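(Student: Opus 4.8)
The plan is to reduce the nearly Gorenstein property, on both sides, to Gorenstein‑ness of the punctured spectrum, and then to compare the two punctured spectra using that $\mathbb{K}[\overline{\Gamma}]$ is an $x_{0}$‑homogenization of $\mathbb{K}[\Gamma]$, with the Gr\"obner hypothesis controlling what happens at infinity.

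First I would set up the homogenization dictionary. Since the given order on $R=\mathbb{K}[x_{1},\dots,x_{e}]$ is a degree reverse lexicographic order it is degree compatible, so homogenization does not change leading terms and, by the standard homogenization‑of‑Gr\"obner‑bases theorem, $G^{h}=\{g^{h}:g\in G\}$ is a Gr\"obner basis of $\overline{I(\Gamma)}$ for the degrevlex order on $S=\mathbb{K}[x_{0},\dots,x_{e}]$ extending the given one with $x_{0}$ least; in particular $\overline{I(\Gamma)}=(G^{h})$. Dehomogenizing at $x_{0}=1$ gives $\mathbb{K}[\overline{\Gamma}]/(x_{0}-1)\cong\mathbb{K}[\Gamma]$, and inverting $x_{0}$ gives $\mathbb{K}[\overline{\Gamma}]_{x_{0}}\cong\mathbb{K}[\Gamma]\otimes_{\mathbb{K}}\mathbb{K}[x_{0}^{\pm1}]$. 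By the Remark, $\omega:=\omega_{\mathbb{K}[\overline{\Gamma}]}$ exists and may be realized as a homogeneous fractional ideal of the two‑dimensional graded domain $\mathbb{K}[\overline{\Gamma}]$; since $\mathrm{Ass}(\omega)=\{0\}$, the parameter $x_{0}-1$ is regular on $\omega$, so $\omega/(x_{0}-1)\omega\cong\omega_{\mathbb{K}[\Gamma]}$, and the formation of the trace ideal commutes with the flat localization at $x_{0}$. Because $\mathrm{tr}(\omega)$ cuts out the non‑Gorenstein locus, $\mathbb{K}[\overline{\Gamma}]$ is nearly Gorenstein iff it is Gorenstein at every prime except $\mathfrak{m}_{S}$, and $\mathbb{K}[\Gamma]$ is nearly Gorenstein iff it is Gorenstein at every prime except $\mathfrak{m}_{R}$ — which, for a numerical semigroup ring, is always the only prime that can lie in the non‑Gorenstein locus.

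Now split the primes of $\mathbb{K}[\overline{\Gamma}]$ into those not containing $x_{0}$ and those containing $x_{0}$. On the first set $\mathbb{K}[\overline{\Gamma}]$ agrees with $\mathbb{K}[\Gamma]\otimes_{\mathbb{K}}\mathbb{K}[x_{0}^{\pm1}]$, whose non‑Gorenstein locus is that of $\mathbb{K}[\Gamma]$ spread out along $\mathbb{G}_{m}$; by the commutation of $\mathrm{tr}$ with this localization, the contribution of these primes to the non‑Gorenstein locus of $\mathbb{K}[\overline{\Gamma}]$ is controlled by $\mathrm{tr}(\omega_{\mathbb{K}[\Gamma]})$ and is contained in $\{\mathfrak{m}_{S}\}$ exactly when $\mathbb{K}[\Gamma]$ is nearly Gorenstein, so this half is automatic. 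The remaining, and main, task is the locus $V(x_{0})$, i.e. the point(s) at infinity of $\mathrm{Proj}\,\mathbb{K}[\overline{\Gamma}]$: one must show that under the Gr\"obner hypothesis $\mathbb{K}[\overline{\Gamma}]$ is Gorenstein along $V(x_{0})$ at every prime strictly below $\mathfrak{m}_{S}$ precisely when $\mathbb{K}[\Gamma]$ is nearly Gorenstein. This is where the assumption that the least variable $x_{e}$ lies in the support of every non‑homogeneous element of $G$ is used essentially: it forces the non‑homogeneous generators $g^{h}$ of $\overline{I(\Gamma)}$ to carry $x_{e}$ in their top‑degree monomials, hence — together with arithmetic Cohen--Macaulayness — that $x_{e}$ is a nonzerodivisor cutting out a well‑behaved affine chart $\mathbb{K}[\overline{\Gamma}]_{x_{e}}$ at infinity whose relevant local ring can be described directly in terms of the canonical ideal of $\mathbb{K}[\Gamma]$, rather than in terms of some unrelated reflected semigroup. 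Making this local ring explicit, computing its Cohen--Macaulay type, and matching the answer with the condition that $\mathbb{K}[\Gamma]$ be nearly Gorenstein, is the technical heart of the proof and the step I expect to be hardest; the homogenization bookkeeping, the descent of $\omega$ along $x_{0}-1$, and the localization behavior of $\mathrm{tr}$ are all formal by comparison.
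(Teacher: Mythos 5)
Your opening reduction is where the argument breaks: you claim that, because $\mathrm{tr}(\omega)$ cuts out the non-Gorenstein locus, ``nearly Gorenstein'' is equivalent to ``Gorenstein at every prime except the graded maximal ideal.'' That equivalence is false. The non-Gorenstein locus being contained in $\{\mathfrak{m}\}$ only says that $\mathrm{tr}(\omega)$ is $\mathfrak{m}$-primary, i.e.\ $\sqrt{\mathrm{tr}(\omega)}\supseteq\mathfrak{m}$; nearly Gorenstein demands the much stronger, non-local inclusion $\mathfrak{m}\subseteq\mathrm{tr}(\omega)$. Your own parenthetical remark exposes the problem: for any numerical semigroup ring the monomial curve is Gorenstein (indeed regular) away from the origin, so under your reduction every $\mathbb{K}[\Gamma]$ would be nearly Gorenstein, which is simply not the case (nearly Gorenstein numerical semigroups are a proper class, characterized by $\mathrm{PF}(\Gamma)$-conditions in Herzog--Hibi--Stamate). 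The same objection applies on the projective side. Consequently the whole strategy of comparing punctured spectra, splitting primes according to whether they contain $x_{0}$, cannot detect the property you are trying to transfer; and the part you yourself flag as the ``technical heart'' --- identifying the local ring along $V(x_{0})$ and computing its type --- is left entirely unexecuted, so even as a plan the proposal has no route to the actual statement.

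The paper argues differently and at the level of the trace ideal itself: using the Herzog--Stamate fact that $G^{h}$ is a Gr\"obner basis of $\overline{I(\Gamma)}$, the arithmetic Cohen--Macaulayness (so $x_{e}$ does not divide any leading monomial) and the support hypothesis on the non-homogeneous elements of $G$, one checks that killing $x_{0}$ and $x_{e}$ sends $G^{h}$ and $G$ to the same ideal, i.e.\ $\pi(G^{h})=\pi(G)$. Since $x_{e}$ is regular on $\mathbb{K}[x_{0},\dots,x_{e}]/(G^{h})$ and $x_{0}$ is regular on the quotient by $(G^{h},x_{e})$, a chain of $\mathrm{Ext}$ isomorphisms (cutting down by these regular elements on both the module and the ring) yields $\omega_{\mathbb{K}[\overline{\Gamma}]}\cong\omega_{\mathbb{K}[\Gamma]}R[x_{0}]$, hence $\mathrm{tr}(\omega_{\mathbb{K}[\overline{\Gamma}]})=\mathrm{tr}(\omega_{\mathbb{K}[\Gamma]})R[x_{0}]$, from which the equivalence of the two nearly Gorenstein conditions is immediate. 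If you want to salvage your approach, you must replace the punctured-spectrum criterion by an actual comparison of canonical modules or trace ideals across the homogenization, which is exactly the content of that $\mathrm{Ext}$ computation.
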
																																	
	\begin{proof} Let $G=\{f_{1},\dots,f_{l},g_{1},\dots,g_{r}\}$ be a 
		Gr\"{o}bner basis of $I(\Gamma)$, with respect to 
		the degree reverse lexicographic order $x_{i}>x_{e}$. Let 
		$f_{1},\dots, f_{l}$ be homogeneous binomials and $g_{1},\dots, g_{r}$ 
		be non-homogeneous elements. It follows from Lemma 2.1 in \cite{Herzog-Stamate} 
		that $G^{h}$ is a Gr\"{o}bner basis of $\overline{I(\Gamma)}$, 
		with respect to the monomial order induced by $x_{i}>x_{0}$. Since 
		$\mathbb{K}[\overline{\Gamma}]$ is arithmetically Cohen-Macaulay, $x_{e}$ does 
		not divide any leading monomial of $G$. From our assumption, the terms 
		of $g_{i}$, which are not the leading term is divisible by $x_{e}$, 
		for $i=1,\dots,e$. 
		
		Consider the natural map $\pi : R[x_0]= \mathbb{K}[x_{0},\dots,x_{e}] \longrightarrow \mathbb{K}[x_{1},\dots,x_{e-1}]$, 
		given by $\pi(x_{e})=0,\pi(x_{0})=0$ and $\pi(x_{i})=x_{i}$ for $i=1,\dots,e-1$. Note that $\frac{\mathbb{K}[x_{0},\dots,x_{e}]}{(G^{h},x_{0},x_{e})} \cong \frac{\mathbb{K}[x_{1},\dots,x_{e-1}]}{\pi(G^{h})}$ as $S/x_{0}$-module. 
		Since $\pi(g_{i})$ is a monomial which is not divisible by $x_{0}$, $x_{e}$ and 
		since any term of $f_{i}$ is not divisible by $x_{0}$, we have $\pi(G^{h})=\pi(G).$ 
		Now, $x_{e}$ being regular in $\frac{\mathbb{K}[x_{0},\dots,x_{e}]}{G^{h}}$, we have 
		$$\omega_{\mathbb{K}[\overline{\Gamma}]}\cong \mathrm{Ext}_{R[x_0]}^{e-1}(R[x_0]/G^h,R[x_0])= \mathrm{Ext}_{R[x_0]}^{e-1}\left(\frac{\mathbb{K}[x_{0},\dots,x_{e}]}{(G^{h},x_{e})}, R[x_0]\right).$$
		Moreover, since $x_{0}$ is regular in $S/(x_{e},G^{h})$, we can write
		\begin{align*}
			\mathrm{Ext}_{R[x_0]}^{e-1}\left(\frac{\mathbb{K}[x_{0},\dots,x_{e}]}{(G^{h},x_{e})}, R[x_0]\right) \cong&
			\mathrm{Ext}_{R}^{e-1}\left(\frac{\mathbb{K}[x_{0},\dots,x_{e}]}{(G^{h},x_{e},x_0)}, R[x_0]\right) \\ \cong& 
			\mathrm{Ext}_{R}^{e-1}\left(\frac{\mathbb{K}[x_{1},\dots,x_{e-1}]}{(\pi(G^{h}))}, R[x_0]\right)\\ \cong& 		\mathrm{Ext}_{R}^{e-1}\left(\frac{\mathbb{K}[x_{1},\dots,x_{e-1}]}{(\pi(G))}, R[x_0]\right) \\ \cong&\mathrm{Ext}_{R}^{e-1}\left(\frac{\mathbb{K}[x_{1},\dots,x_{e}]}{(G,x_e)}, R[x_0]\right)\\ \cong &\mathrm{Ext}_{R}^{e-1}\left(\frac{\mathbb{K}[x_{1},\dots,x_{e}]}{G}, R[x_0]\right)\cong \omega_{\mathbb{K}[\Gamma]}R[x_0].
		\end{align*}
		which is true since 
		$x_{e}$ is both $\mathbb{K}[x_{1},\dots,x_{e}]$-regular and $\mathbb{K}[x_{1},\dots,x_{e}]/G$ - regular. Hence $\omega_{\mathbb{K}[\overline{\Gamma}]}\cong \omega_{\mathbb{K}[\Gamma]}R[x_0] $.  Since every elements of $\mathrm{Hom}_{R[x_0]}(\omega_{\mathbb{K}[\overline{\Gamma}]},R[x_0])$ can be obtained  obtained by elements of $ \mathrm{Hom}_{R}(\omega_{\mathbb{K}[\Gamma]},R)$ using composition with $f:R \rightarrow R[x_0]$. Hence, $$\mathrm{tr}_{R[x_0]}(\omega_{\mathbb{K}[\overline{\Gamma}]})=\sum_{\phi \in \mathrm{Hom}_{R[x_0]}(\omega_{\mathbb{K}[\overline{\Gamma}]},R[x_0])}\phi(\omega_{\mathbb{K}[\overline{\Gamma}]})\cong\sum_{\phi \in \mathrm{Hom}_{R}(\omega_{\mathbb{K}[\Gamma]},R)}\phi(\omega_{\mathbb{K}[\Gamma]})=\mathrm{tr}_{R}(\omega_{\mathbb{K}[\Gamma]})R[x_0].$$
		Therefore, $\mathbb{K}[\Gamma]$ is nearly-Gorenstein implies  
		$\mathfrak{m} \subseteq \mathrm{tr}_{R}(\omega_{\mathbb{K}[\Gamma]}) $ and by above equality $\mathfrak{m}[x_0] \subseteq  \mathrm{tr}_{R[x_0]}(\omega_{\mathbb{K}[\overline{\Gamma}]})$, therefore $\mathbb{K}[\overline{\Gamma}]$ is nearly-Gorenstein and vice versa.
	\end{proof}
	
	\medspace
	
	\begin{corollary}\label{BPAS}
		Let $\Gamma=\Gamma(n_{1},\dots,n_{e})$ be a numerical semigroup, minimally generated by 
		an arithmetic sequence $n_{1}<n_{2}<\dots<n_{e}$, such that $n_{i}=n_{1}+(i-1)d, 1 \leq i \leq e, e\geq n_1$  
		and $n_{1}=q(e-1)+r', r' \in [1,e]$. Then $\mathbb{K}[\overline{\Gamma}]$ is nearly Gorenstein.	
	\end{corollary}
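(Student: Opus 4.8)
The plan is to verify that $\Gamma$ satisfies the hypotheses of Theorem \ref{BSP} and that $\mathbb{K}[\Gamma]$ is itself nearly Gorenstein; Theorem \ref{BSP} then yields the claim. Observe first that $n_1+n_1 d=(d+1)n_1$ is a positive multiple of $n_1$ and hence is never a minimal generator, so minimal generation by the arithmetic sequence forces $e\le n_1$; combined with $e\ge n_1$ this gives $e=n_1$. In this range it is known, and readily verified by matching semigroup degrees, that
\[
I(\Gamma)=I_2\begin{pmatrix}x_1 & x_2 & \cdots & x_{e-1} & x_e\\ x_2 & x_3 & \cdots & x_e & x_1^{d+1}\end{pmatrix},
\]
the ideal of $2\times 2$ minors of a $2\times e$ Hankel-type matrix $M$ (the corner exponent $d+1$ being forced by $2n_e-n_{e-1}=(d+1)n_1$, and the equality $I(\Gamma)=I_2(M)$ following because $I_2(M)$ has codimension $e-1$ and $I(\Gamma)$ is a prime of that codimension containing it).

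Next I would compute leading terms for the degree reverse lexicographic order $x_1>x_2>\cdots>x_e$, which meets the requirement $x_i>x_e$ of Theorem \ref{BSP}. The homogeneous minors $x_i x_{j+1}-x_{i+1}x_j$ (for $1\le i<j\le e-1$) have leading term $x_{i+1}x_j$, while the non-homogeneous minors $x_1^{d+1}x_i-x_{i+1}x_e$ (for $1\le i\le e-1$) have leading term $x_1^{d+1}x_i$, which is of strictly larger total degree. Two things follow at once: no leading monomial is divisible by $x_e$, so $\mathbb{K}[\overline{\Gamma}]$ is arithmetically Cohen--Macaulay (by Gr\"obner degeneration, this being the converse of the implication used in the proof of Theorem \ref{BSP}; cf.\ \cite{Herzog-Stamate}); and $x_e$ lies in the support of every non-homogeneous minor. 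So, as soon as one knows that the minors of $M$ form a Gr\"obner basis of $I(\Gamma)$ for this order, all the hypotheses of Theorem \ref{BSP} are in place. Proving the Gr\"obner basis property is the step that really uses the determinantal and arithmetic structure: most pairs of leading monomials are coprime, and the remaining $S$-polynomials reduce to zero via the Pl\"ucker-type syzygies among the $2\times 2$ minors that underlie the Eagon--Northcott resolution of $I_2(M)$, with the corner exponent $d+1$ being tracked along the reductions. I expect this to be the principal obstacle; the remaining steps are comparatively routine.

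Finally, $\mathbb{K}[\Gamma]$ is nearly Gorenstein. For $e=n_1$ the Ap\'ery set of $\Gamma$ relative to $n_1$ is $\{0,n_2,\dots,n_e\}$ (a standard computation), so the pseudo-Frobenius set of $\Gamma$ equals $\{d,2d,\dots,(e-1)d\}$ and $\mathrm{F}(\Gamma)=(e-1)d$; these numbers are symmetric about $\mathrm{F}(\Gamma)$, so $\Gamma$ is almost symmetric and $\mathbb{K}[\Gamma]$ is almost Gorenstein, hence nearly Gorenstein (see \cite{Residue,Nearly-Almost}). Applying Theorem \ref{BSP} with the Gr\"obner basis found above now shows that $\mathbb{K}[\overline{\Gamma}]$ is nearly Gorenstein, which completes the proof.
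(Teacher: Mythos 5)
Your overall strategy coincides with the paper's: exhibit a Gr\"obner basis of $I(\Gamma)$ for degrevlex with $x_e$ smallest in which $x_e$ divides no leading monomial but lies in the support of every non-homogeneous element, check that $\mathbb{K}[\Gamma]$ is nearly Gorenstein, and invoke Theorem \ref{BSP}. Your leading-term analysis, the observation that the absence of $x_e$ from the initial ideal gives arithmetic Cohen--Macaulayness via the Herzog--Stamate criterion \cite{Herzog-Stamate}, and the almost-symmetry computation (Ap\'ery set $\{0,n_2,\dots,n_e\}$, $\mathrm{PF}(\Gamma)=\{d,2d,\dots,(e-1)d\}$, pairs summing to $\mathrm{F}(\Gamma)$, hence almost Gorenstein and therefore nearly Gorenstein by \cite{Trace,Nearly-Almost}) are all sound; your reading that the hypothesis $e\geq n_1$ forces $e=n_1$ is faithful to the literal statement, though note the paper's own proof (which quotes the Gr\"obner basis of \cite{Bermejo} with the parameters $q,r'$ and \cite[Proposition 2.4]{Residue}) is set up for arbitrary arithmetic sequences, so your argument is narrower than the evidently intended scope.

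The genuine gap is that the one hypothesis that actually drives Theorem \ref{BSP} -- that the listed binomials form a Gr\"obner basis of $I(\Gamma)$ for the chosen order -- is never established: you explicitly defer it as ``the principal obstacle,'' offering only a sketch via Pl\"ucker-type syzygies, whereas the paper closes exactly this point by citing \cite{Bermejo}. Without either that citation or a completed $S$-polynomial/Buchberger verification, neither the ACM hypothesis nor the support condition of Theorem \ref{BSP} is actually in force, since both are read off from the (unproven) initial ideal. A second, smaller gap is the identification $I(\Gamma)=I_2(M)$: containment of the prime $I(\Gamma)$ over $I_2(M)$ in the same codimension only shows that $I(\Gamma)$ is a minimal prime of $I_2(M)$ (which is Cohen--Macaulay, hence unmixed, by Eagon--Northcott); to conclude equality you would additionally need $I_2(M)$ to be prime (or radical and irreducible), or you should obtain the equality as a byproduct of the Gr\"obner basis computation itself (e.g.\ by comparing Hilbert functions or reducing an arbitrary binomial of $I(\Gamma)$ modulo the minors). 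As written, the proposal is an outline of the paper's argument with the two load-bearing facts asserted rather than proved.
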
		
	
	\begin{proof} The set $G=\{x_{i}x_{j}-x_{i-1}x_{j+1} \vert 2 \leq i \leq  j \leq n-1\} \cup \{x_{1}^{q+d}x_{i}-x_{e+i}x_{e}^{q} \vert 1 \leq i \leq e-r'\}$ is a Gr\"{o}bner basis of the defining ideal $I(\Gamma)$ of $C(\Gamma)$, with respect to the degree reverse lexicographic ordering 
		induced by $x_{1}>x_{2}>\cdots>x_{e}$ and 
		$in_{<}(G)=\{x_{i}x_{j} \vert 2 \leq i \leq j \leq e-1\} \cup \{x_{1}^{q+d}x_{i} \vert 1 \leq i \leq e-r'\}$(See \cite{Bermejo}). Note that, 
		$x_{e}$ belongs to the support of all non-homogeneous elements of $G$. 
		By Theorem \ref{BSP}, and \cite[Proposition 2.4]{Residue}, $\mathbb{K}[\overline{\Gamma}]$ is nearly Gorenstein.	
	\end{proof}
	
	\medspace
	
	\section{Nearly Gorenstein Associated graded ring of simplicial Affine semigroups}
	\medskip
	
	Let $\Gamma$ be an affine semigroup, i.e, a finitely generated semigroup which for some $r$ is isomorphic to a subsemigroup of $\mathbb{Z}^r$ containing zero. 
 Suppose that 
	$\Gamma$ is a simplicial affine semigroup, fully embedded in $\mathbb{N}^{d}$, minimally generated by 
	$\{\mathbf{a}_{1}.\dots,\mathbf{a}_{d},\mathbf{a}_{d+1},\dots,\mathbf{a}_{d+r}\}$ with the set 
	of extremal rays $E=\{\mathbf{a}_{1},\dots,\mathbf{a}_{d}\}$. 	The semigroup algebra $\mathbb{K}[\Gamma]$ over a field $\mathbb{K}$ is generated by the 
	monomials $\mathbf{x}^{\mathbf{a}}$, where $\mathbf{a} \in \Gamma$, with maximal ideal 
	$\mathfrak{m}=(\mathbf{x}^{\mathbf{a}_{1}},\dots,\mathbf{x}^{\mathbf{a}_{d+r}})$.
	
	Let $I(\Gamma)$ denote the defining ideal of $\mathbb{K}[\Gamma]$, which is the 
	kernel of the $\mathbb{K}-$algebra homomorphism  
	$\phi:R=\mathbb{K}[z_{1},\dots,z_{d+r}] \rightarrow \mathbb{K}[\mathbf{x}^{\mathbf{a}_{1}},\dots,\mathbf{x}^{\mathbf{a}_{d+r}}]$, 
	such that $\phi(z_{i})=\mathbf{x}^{\mathbf{a}_{i}}$, $i=1,\dots,d+r$. Let 
	us write $\mathbb{K}[\Gamma]\cong A/I(\Gamma)$.  The defining ideal $I(\Gamma)$ 
	is a binomial prime ideal (\cite{Herzog}, Proposition 1.4). 
	The associated graded ring 
	$\mathrm{gr}_{\mathfrak{m}}(\mathbb{K}[\Gamma])=\oplus_{i=0}^{\infty}\mathfrak{m}^{i}/\mathfrak{m}^{i+1}$ is 
	isomorphic to $\frac{\mathbb{K}[z_{1},\dots,z_{d+r}]}{I(\Gamma)^{*}}$ 
	(see \cite{Bruns-Herzog}, Example 4.6.3), where $I(\Gamma)^{*}$ is the 
	homogeneous ideal generated by the initial forms $f^{*}$ of the elements 
	$f\in I(\Gamma)$, and $f^{*}$ is the homogeneous summand of $f$ of the 
	least degree.

		\begin{definition}{\rm 
			Let $\Gamma$ be an affine semigroup in $\mathbb{N}^{d}$, minimally generated by $\mathbf{a}_{1},\dots,\mathbf{a}_{n}$. The \textit{rational polyhedral cone} generated by $\Gamma$ is defined as
			\[
			\mathrm{cone}(\Gamma)=\big\{\sum_{i=1}^{n}\alpha_{i}\mathbf{a}_{i}: \alpha_{i} \in \mathbb{R}_{\geq 0}, \,i=1,\dots,n\big\}.
			\]
			The \textit{dimension} of $\Gamma$ is defined as the dimension of the subspace generated by $\mathrm{cone}(\Gamma)$.
		}
	\end{definition} 
	
	The $\mathrm{cone}(\Gamma)$ is the intersection of finitely many closed linear half-spaces in $\mathbb{R}^{d}$, 
	each of whose bounding hyperplanes contains the origin. These half-spaces are called \textit{support hyperplanes}. 
	
	\begin{definition}\label{Extremal rays} {\rm 
			Suppose $\Gamma$ is an affine semigroup of dimension $r$ in $\mathbb{N}^{d}$. If $r=1$, $\mathrm{cone}(\Gamma)=\mathbb{R}_{\geq 0}$.
			If $r=2$, the support hyperplanes are one-dimensional vector spaces, which are called the 
			\textit{extremal rays} of $\mathrm{cone}(\Gamma)$. If $r >2$, intersection of any two
			adjacent support hyperplanes is a one-dimensional vector space, called an extremal ray 
			of $\mathrm{cone}(\Gamma)$. An element of $\Gamma$ is called an extremal ray of $\Gamma$ 
			if it is the smallest non-zero vector of $\Gamma$ in an extremal ray of $\mathrm{cone}(\Gamma)$. 
		}
	\end{definition}
	
	\begin{definition}\label{Extremal}{\rm 
			An affine semigroup $\Gamma$ in $\mathbb{N}^{d}$, is said to be \textit{simplicial}   
			if the $\mathrm{cone}(\Gamma)$ has exactly $d$ extremal rays, i.e., if there exist a set with $d$ elements, say 
			$\{\mathbf{a}_{1},\dots,\mathbf{a}_{d}\} \subset \{\mathbf{a}_{1},\dots,\mathbf{a}_{d},\mathbf{a}_{d+1},\dots,\mathbf{a}_{d+r}\}$, such that they 
			are linearly independent over $\mathbb{Q}$ and $\Gamma \subset \sum\limits_{i=1}^{d}\mathbb{Q}_{\geq 0}\mathbf{a}_{i}$.
		}
	\end{definition}

	\begin{definition}{\rm
			Let $(B,\mathcal{F})$ be a filtered, Noetherian ring. A sequence $g = g_{1},\dots,g_{n}$ in $B$ is called 
			\textit{super regular} if the sequence of initial forms $g^{*} = g_{1}^{*},\dots,g_{n}^{*}$ is 
			regular in $\mathrm{gr}_{\mathcal{F}}(B)$.
		}
	\end{definition}
	
	\begin{lemma}\label{Cond}
		Let 
		$(\mathbf{x}^{\mathbf{a}_{1}},\dots,\mathbf{x}^{\mathbf{a}_{d+r}})$ be a reduction ideal of $\mathfrak{m}$, then the following statements are equivalent:
		\begin{enumerate}[(a)]
			\item $\mathrm{gr}_{\mathfrak{m}}(\mathbb{K}[\Gamma])$ is a Cohen-Macaulay ring.
			\item $(\mathbf{x}^{\mathbf{a}_{1}})^{*},\dots,(\mathbf{x}^{\mathbf{a}_{d}})^{*}$ provides a regular sequence in $\mathrm{gr}_{\mathfrak{m}}(\mathbb{K}[\Gamma])$.
			\item $\mathbb{K}[\Gamma]$ is Cohen-Macaulay and $(\mathbf{x}^{\mathbf{a}_{i}})^{*}$ is a non-zero divisor in 
			$\mathrm{gr}_{\mathfrak{m}}(\mathbb{K}[\Gamma]),\, \text{for} \,\, i=1,\dots,d$.
		\end{enumerate}
	\end{lemma}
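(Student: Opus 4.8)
Write $G=\mathrm{gr}_{\mathfrak{m}}(\mathbb{K}[\Gamma])$ and $\mathfrak{q}=(\mathbf{x}^{\mathbf{a}_{1}},\dots,\mathbf{x}^{\mathbf{a}_{d}})$ for the ideal of the extremal rays, which by hypothesis is a reduction of $\mathfrak{m}$; note $\dim\mathbb{K}[\Gamma]=\dim G=d$. The plan is to prove the cyclic chain $(a)\Rightarrow(b)\Rightarrow(c)\Rightarrow(a)$. First I would fix the combinatorial model: since each $\mathbf{x}^{\mathbf{a}_{i}}$ is a minimal generator of $\mathfrak{m}$, its initial form $(\mathbf{x}^{\mathbf{a}_{i}})^{*}$ lies in $G_{1}$; writing $\mathbb{K}[\Gamma]=\bigoplus_{\gamma\in\Gamma}\mathbb{K}\,t^{\gamma}$ with $t^{\gamma}:=\mathbf{x}^{\gamma}$ and letting $\mathrm{ord}(\gamma)$ denote the $\mathfrak{m}$-adic order of $t^{\gamma}$, one gets $G=\bigoplus_{\gamma\in\Gamma}\mathbb{K}\,\overline{t}^{\,\gamma}$ with one-dimensional graded pieces and product $\overline{t}^{\,\gamma}\overline{t}^{\,\delta}=\overline{t}^{\,\gamma+\delta}$ if $\mathrm{ord}(\gamma+\delta)=\mathrm{ord}(\gamma)+\mathrm{ord}(\delta)$ and $0$ otherwise. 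Thus $(\mathbf{x}^{\mathbf{a}_{i}})^{*}=\overline{t}^{\,\mathbf{a}_{i}}$, and since each product $\overline{t}^{\,\mathbf{a}_{i}}\overline{t}^{\,\gamma}$ is either the basis vector $\overline{t}^{\,\mathbf{a}_{i}+\gamma}$ or $0$, the element $(\mathbf{x}^{\mathbf{a}_{i}})^{*}$ is a nonzerodivisor in $G$ precisely when $\mathrm{ord}(\mathbf{a}_{i}+\gamma)=\mathrm{ord}(\gamma)+1$ for every $\gamma\in\Gamma$.

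Since $\mathfrak{q}$ is a reduction of $\mathfrak{m}$ we have $\mathfrak{m}^{s+1}=\mathfrak{q}\,\mathfrak{m}^{s}$ for $s\gg0$, hence $G_{k}=\sum_{i}(\mathbf{x}^{\mathbf{a}_{i}})^{*}G_{k-1}$ for $k>s$, so $(\mathbf{x}^{\mathbf{a}_{1}})^{*},\dots,(\mathbf{x}^{\mathbf{a}_{d}})^{*}$ is a homogeneous system of parameters of the $d$-dimensional ring $G$. Now $(a)\Rightarrow(b)$ is immediate, a system of parameters in a Cohen--Macaulay ring being a regular sequence. For $(b)\Rightarrow(c)$: a regular sequence of $d$ positive-degree homogeneous elements forces $\mathrm{depth}\,G=d=\dim G$, so $G$ is Cohen--Macaulay, whence $\mathbb{K}[\Gamma]$ is Cohen--Macaulay as $\mathrm{depth}\,\mathbb{K}[\Gamma]\ge\mathrm{depth}\,G$, and each $(\mathbf{x}^{\mathbf{a}_{i}})^{*}$ is a nonzerodivisor by permutability of regular sequences of homogeneous elements.

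For the main step $(c)\Rightarrow(a)$ I would argue by a multiplicity count. Under $(c)$ no product $\overline{t}^{\,\mathbf{a}_{i}}\overline{t}^{\,\gamma}$ vanishes, so $\big((\mathbf{x}^{\mathbf{a}_{1}})^{*},\dots,(\mathbf{x}^{\mathbf{a}_{d}})^{*}\big)=\mathrm{span}_{\mathbb{K}}\{\overline{t}^{\,\delta}:\delta\in\bigcup_{i}(\mathbf{a}_{i}+\Gamma)\}$ and therefore
\[
\dim_{\mathbb{K}}\!\Big(G\big/\big((\mathbf{x}^{\mathbf{a}_{1}})^{*},\dots,(\mathbf{x}^{\mathbf{a}_{d}})^{*}\big)\Big)=\#\,\mathrm{Ap}(\Gamma,E),\qquad \mathrm{Ap}(\Gamma,E):=\{\delta\in\Gamma:\delta-\mathbf{a}_{i}\notin\Gamma\ \forall\,i\}.
\]
The identical monomial count (valid with no hypothesis, as $\mathfrak{q}$ is a monomial ideal) gives $\ell(\mathbb{K}[\Gamma]/\mathfrak{q})=\#\,\mathrm{Ap}(\Gamma,E)$, a finite number since $\mathfrak{q}$ is $\mathfrak{m}$-primary. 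Then I would combine: $e(G)=e(\mathfrak{m};\mathbb{K}[\Gamma])=e(\mathfrak{q};\mathbb{K}[\Gamma])$ because reductions preserve multiplicity; $e(\mathfrak{q};\mathbb{K}[\Gamma])=\ell(\mathbb{K}[\Gamma]/\mathfrak{q})$ because $\mathbb{K}[\Gamma]$ is Cohen--Macaulay and $\mathfrak{q}$ is a parameter ideal; and $e(G)=e\big((\mathbf{x}^{\mathbf{a}_{i}})^{*};G\big)\le\dim_{\mathbb{K}}\big(G/((\mathbf{x}^{\mathbf{a}_{i}})^{*})\big)$ since the $(\mathbf{x}^{\mathbf{a}_{i}})^{*}$ form a linear system of parameters of the standard graded ring $G$. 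These relations together with the displayed equality force $e(G)=\dim_{\mathbb{K}}(G/((\mathbf{x}^{\mathbf{a}_{i}})^{*}))$, which is equivalent to $G$ being Cohen--Macaulay.

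I expect the one genuine obstacle to be inside $(c)\Rightarrow(a)$: verifying carefully that the individual nonzerodivisor hypotheses really do keep \emph{all} the products $\overline{t}^{\,\mathbf{a}_{i}}\overline{t}^{\,\gamma}$ alive in $G$ (equivalently, that adding an extremal ray never raises the $\mathfrak{m}$-adic order by more than one) and then matching the resulting span with the Apéry description of $\mathbb{K}[\Gamma]/\mathfrak{q}$; the subsequent multiplicity bookkeeping is routine. An alternative route for $(c)\Rightarrow(b)$ would be induction on $d$ using $G/((\mathbf{x}^{\mathbf{a}_{1}})^{*})\cong\mathrm{gr}_{\overline{\mathfrak{m}}}(\mathbb{K}[\Gamma]/(\mathbf{x}^{\mathbf{a}_{1}}))$ together with the fact that $\mathbb{K}[\Gamma]/(\mathbf{x}^{\mathbf{a}_{1}})$ remains Cohen--Macaulay of dimension $d-1$, but showing the remaining initial forms stay nonzerodivisors after the quotient is itself essentially the Apéry computation above, so I would prefer the direct length argument.
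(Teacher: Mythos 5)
Your argument is essentially correct, and it is genuinely different from what the paper does: the paper offers no proof at all for Lemma \ref{Cond}, simply citing Proposition 5.2 of D'Anna--Jafari--Strazzanti (the reference \cite{Reduction}), whereas you give a self-contained argument. Your route exploits the $\Gamma$-graded ``fine'' structure of $\mathrm{gr}_{\mathfrak{m}}(\mathbb{K}[\Gamma])$ (one-dimensional pieces $\overline{t}^{\,\gamma}$ with product governed by additivity of the order function), which correctly reduces the nonzerodivisor condition to $\mathrm{ord}(\mathbf{a}_i+\gamma)=\mathrm{ord}(\gamma)+1$ for all $\gamma$, and then settles $(c)\Rightarrow(a)$ by the Ap\'ery-set length count combined with the chain $e(G)=e(\mathfrak{m};\mathbb{K}[\Gamma])=e(\mathfrak{q};\mathbb{K}[\Gamma])=\ell(\mathbb{K}[\Gamma]/\mathfrak{q})$ and the criterion that a parameter ideal with colength equal to multiplicity forces Cohen--Macaulayness; this is the same circle of ideas (Ap\'ery sets, order function, reduction) used in the cited source, but your multiplicity bookkeeping is a clean and legitimate alternative to reproducing their argument. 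Two small points you should make explicit: first, the hypothesis as printed in the lemma, ``$(\mathbf{x}^{\mathbf{a}_1},\dots,\mathbf{x}^{\mathbf{a}_{d+r}})$ is a reduction ideal of $\mathfrak{m}$,'' is literally $\mathfrak{m}$ itself; you (rightly) read it as the intended hypothesis that the extremal-ray monomials $\mathbf{x}^{\mathbf{a}_1},\dots,\mathbf{x}^{\mathbf{a}_d}$ generate a reduction of $\mathfrak{m}$, and that reading should be stated, since it is what makes the initial forms a linear system of parameters of $G$. Second, the two standard facts you lean on --- $\mathrm{depth}\,\mathbb{K}[\Gamma]_{\mathfrak m}\ge\mathrm{depth}\,\mathrm{gr}_{\mathfrak m}(\mathbb{K}[\Gamma])$ in $(b)\Rightarrow(c)$, and ``$e(\mathfrak{q})=\ell(A/\mathfrak{q})$ for one parameter ideal implies Cohen--Macaulay'' in $(c)\Rightarrow(a)$ --- deserve explicit citations (e.g.\ \cite{Bruns-Herzog}, Theorem 4.7.10, for the latter), but with those references supplied the proof stands.
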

	
	\begin{proof} See Proposition 5.2 in \cite{Reduction}. 
	\end{proof}

	Let $A$ be a filtered noetherian graded ring with homogeneous maximal ideal $\mathfrak{m}_{A}$ and suppose $B=A/xA$, where $x$ is not a zero-divisor on $A$. Let 
	$\psi:A \rightarrow B$ be the canonical epimorphism.

	\begin{lemma}\label{iso}
		If $x$ is a super regular in $A$ then 
		\[
		\mathrm{gr}_{\mathfrak{m}_{A}}(A) \xrightarrow{(x)^{*}} \mathrm{gr}_{\mathfrak{m}_{A}}(A) \xrightarrow{\mathrm{gr}(\psi)} \mathrm{gr}_{\mathfrak{m}_{B}}(B) \rightarrow 0
		\]
		is exact. 
	\end{lemma}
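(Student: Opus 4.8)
The plan is to identify $\mathrm{gr}_{\mathfrak{m}_B}(B)$ with the quotient ring $\mathrm{gr}_{\mathfrak{m}_A}(A)/(x)^*\,\mathrm{gr}_{\mathfrak{m}_A}(A)$, which is precisely what the displayed three-term sequence asserts. I would begin by pinning down the filtration on $B$: since $\psi$ is a graded epimorphism and $x\in\mathfrak{m}_A$, one has $\mathfrak{m}_B=\psi(\mathfrak{m}_A)$ and hence $\mathfrak{m}_B^{\,n}=\psi(\mathfrak{m}_A^n)$ for all $n$, so the $\mathfrak{m}_B$-adic filtration of $B$ is exactly the filtration induced from the $\mathfrak{m}_A$-adic one via $\psi$, and $\mathrm{gr}(\psi)$ acts in degree $n$ as the evident surjection $\mathfrak{m}_A^n/\mathfrak{m}_A^{n+1}\twoheadrightarrow\psi(\mathfrak{m}_A^n)/\psi(\mathfrak{m}_A^{n+1})$. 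This makes $\mathrm{gr}(\psi)$ surjective, giving exactness at the right-hand term. Writing $c_0=\mathrm{ord}(x)$, so that $x\in\mathfrak{m}_A^{c_0}\setminus\mathfrak{m}_A^{c_0+1}$ and $(x)^*$ is the nonzero class of $x$ in $\mathfrak{m}_A^{c_0}/\mathfrak{m}_A^{c_0+1}$ (a graded endomorphism of degree $c_0$ on $\mathrm{gr}_{\mathfrak{m}_A}(A)$), the inclusion $(x)^*\,\mathrm{gr}_{\mathfrak{m}_A}(A)\subseteq\ker\mathrm{gr}(\psi)$ needs no hypothesis on $x$: for homogeneous $\bar d$ we have $\mathrm{gr}(\psi)\big((x)^*\bar d\big)=\overline{\psi(xd)}=\overline{\psi(x)\psi(d)}=0$ because $\psi(x)=0$.

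The substantive step is the reverse inclusion $\ker\mathrm{gr}(\psi)\subseteq(x)^*\,\mathrm{gr}_{\mathfrak{m}_A}(A)$, and it is here that super regularity of $x$ is used: since $x$ is already a nonzerodivisor on $A$, super regularity says exactly that $(x)^*$ is a nonzerodivisor on $\mathrm{gr}_{\mathfrak{m}_A}(A)$, equivalently that $\mathrm{ord}(xc)=c_0+\mathrm{ord}(c)$ for every $c\in A$ (with the convention $\mathrm{ord}(0)=\infty$). Given a homogeneous $\xi\in\mathfrak{m}_A^i/\mathfrak{m}_A^{i+1}$ with $\mathrm{gr}(\psi)(\xi)=0$, I would lift it to $a\in\mathfrak{m}_A^i$; then $\psi(a)\in\psi(\mathfrak{m}_A^{i+1})$, and since $\ker\psi=xA$ this gives $a=b+xc$ with $b\in\mathfrak{m}_A^{i+1}$ and $c\in A$. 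From $xc=a-b\in\mathfrak{m}_A^i$ and super regularity we obtain $c_0+\mathrm{ord}(c)=\mathrm{ord}(xc)\ge i$, i.e.\ $\mathrm{ord}(c)\ge i-c_0$. If this inequality is strict (which covers the degenerate cases $c=0$ and $i<c_0$) then $xc\in\mathfrak{m}_A^{i+1}$, so $\xi=\bar a=0$; otherwise $\mathrm{ord}(c)=i-c_0\ge 0$, and then $(x)^*\cdot(c)^*$ is exactly the class of $xc=a-b$ in $\mathfrak{m}_A^i/\mathfrak{m}_A^{i+1}$, namely $\xi$ itself. This exhibits $\xi$ in the image of $(x)^*$, completing the proof. (The hypothesis that $x$ is a nonzerodivisor on $A$ is used only to identify $\ker\psi$ with $xA$.)

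I do not expect a conceptual obstacle here; the delicate point is purely the degree bookkeeping — keeping track that $(x)^*$ shifts degree by $c_0$, that the order function behaves multiplicatively (through super regularity) on exactly the elements that arise, and that the case split on $\mathrm{ord}(c)$ subsumes the low-degree degenerate situations — so that the equality $\mathrm{image}\big((x)^*\big)=\ker\mathrm{gr}(\psi)$ holds on the nose in each graded degree.
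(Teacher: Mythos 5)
Your argument is correct. Note that the paper does not actually prove this lemma: it simply cites Lemma a of Herzog's paper \cite{Super}, so your write-up supplies the standard direct verification that the citation stands in for --- surjectivity of $\mathrm{gr}(\psi)$ from $\mathfrak{m}_B^{\,n}=\psi(\mathfrak{m}_A^{\,n})$, the inclusion $(x)^{*}\,\mathrm{gr}_{\mathfrak{m}_A}(A)\subseteq \ker \mathrm{gr}(\psi)$ with no hypothesis, and the reverse inclusion via the order function, with super regularity entering exactly as the additivity $\mathrm{ord}(xc)=\mathrm{ord}(x)+\mathrm{ord}(c)$; your case split on $\mathrm{ord}(c)$ versus $i-\mathrm{ord}(x)$ correctly handles the degenerate situations, and since both maps are homogeneous it suffices, as you do, to check exactness on homogeneous elements. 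Two cosmetic points. First, the order-function argument tacitly uses that the filtration is separated, i.e.\ $\bigcap_n\mathfrak{m}_A^{\,n}=0$, which is what makes $(c)^{*}$ well defined for $c\neq 0$ and gives the equivalence between ``$(x)^{*}$ is a nonzerodivisor on $\mathrm{gr}_{\mathfrak{m}_A}(A)$'' and additivity of $\mathrm{ord}$; this is automatic in the Noetherian graded setting of the paper but deserves a word. Second, your closing parenthesis is slightly off: $\ker\psi=xA$ holds by the very definition $B=A/xA$ and needs no regularity of $x$; the hypothesis that $x$ is a nonzerodivisor on $A$ is in fact not used anywhere in your proof (and, once the filtration is separated, it already follows from super regularity).
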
 
	\begin{proof} See Lemma a in \cite{Super}.
		
	\end{proof}
	
	\begin{lemma}\label{iso1}
		Consider a map $$\pi_{d}:(R:=)\mathbb{K}[z_{1},\dots,z_{d},\dots,z_{d+r}] \rightarrow \bar{R}=\mathbb{K}[z_{d+1},\dots,z_{d+r}]$$ such that $\pi_{d}(z_{j})=0, 1 \leq j \leq d$ and $\pi_{d}(z_{j})=z_{j}, d+1 \leq j \leq d+r$. If $z_{1},\dots,z_{d}$ is a super regular in $R/I(\Gamma)$ then
		
		$$\mathrm{gr}_{\bar{\mathfrak{m}}}\big(\bar{R}/\pi_{d}(I(\Gamma)) \cong \frac{\mathrm{gr}_{\mathfrak{m}}(R/I(\Gamma))}{(z_{1},\dots,z_{d})\mathrm{gr}_{\mathfrak{m}}(R/I(\Gamma))},$$ 
		where  $\bar{\mathfrak{m}}=\pi_{d}(\mathfrak{m})$.
	\end{lemma}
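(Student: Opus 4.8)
The plan is to prove the statement by induction on $d$, peeling off the variables $z_1,\dots,z_d$ one at a time and invoking Lemma \ref{iso} at each stage. Write $A=R/I(\Gamma)=\mathbb{K}[\Gamma]$ and, for $0\le i\le d$, set $A_i=R/\big(I(\Gamma)+(z_1,\dots,z_i)\big)$, so that $A_0=A$, $A_i=A_{i-1}/z_iA_{i-1}$ (with $z_i$ denoting its residue class at each stage), and $A_d=\bar R/\pi_d(I(\Gamma))$; let $\bar{\mathfrak m}_i$ denote the homogeneous maximal ideal of $A_i$, so $\bar{\mathfrak m}_0=\mathfrak m$ and $\bar{\mathfrak m}_d=\bar{\mathfrak m}=\pi_d(\mathfrak m)$, and let $\psi_i\colon A_{i-1}\to A_i$ be the canonical epimorphism. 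Recall the identification $\mathrm{gr}_{\mathfrak m}(A)\cong R/I(\Gamma)^{*}$. Under it the initial form $(\mathbf x^{\mathbf a_i})^{*}$ is the degree-one class of $z_i$: indeed $I(\Gamma)$ is generated by binomials of degree at least $2$ — there is no linear binomial $z_i-z_j$ in $I(\Gamma)$ since the $\mathbf a_i$ are distinct minimal generators — so $I(\Gamma)^{*}\subseteq\mathfrak m_R^{2}$ and each $z_i$ has order exactly $1$ in $A$. Thus the hypothesis that $z_1,\dots,z_d$ is super regular in $A$ says precisely that the degree-one classes of $z_1,\dots,z_d$ form a regular sequence on $\mathrm{gr}_{\mathfrak m}(A)$.

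For the inductive step I would maintain the following hypothesis at stage $i-1$ (trivially true for $i-1=0$): applying Lemma \ref{iso} successively $i-1$ times yields an isomorphism
\[
\theta_{i-1}\colon \frac{\mathrm{gr}_{\mathfrak m}(A)}{\big((z_1)^{*},\dots,(z_{i-1})^{*}\big)\mathrm{gr}_{\mathfrak m}(A)}\ \xrightarrow{\ \sim\ }\ \mathrm{gr}_{\bar{\mathfrak m}_{i-1}}(A_{i-1})
\]
under which the class of $(z_j)^{*}$ corresponds to the initial form of $z_j$ in $A_{i-1}$ for $i\le j\le d$, and these $d-i+1$ initial forms form a regular sequence. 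Granting this, $(z_i)^{*}$ is a nonzerodivisor on $\mathrm{gr}_{\bar{\mathfrak m}_{i-1}}(A_{i-1})$, hence $z_i$ is a nonzerodivisor on $A_{i-1}$ (if $z_ia=0$ with $a\neq0$ then $a^{*}\neq0$, so $(z_i)^{*}a^{*}\neq0$ and therefore $(z_ia)^{*}=(z_i)^{*}a^{*}\neq0$, a contradiction), so Lemma \ref{iso} applies with $x=z_i$ and gives $\mathrm{gr}_{\bar{\mathfrak m}_i}(A_i)\cong \mathrm{gr}_{\bar{\mathfrak m}_{i-1}}(A_{i-1})/(z_i)^{*}\mathrm{gr}_{\bar{\mathfrak m}_{i-1}}(A_{i-1})$ via $\mathrm{gr}(\psi_i)$. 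The point needing care is that each $z_j$ with $j>i$ still has order exactly $1$ in $A_i$: since $I(\Gamma)\subseteq\mathfrak m_R^{2}$, the space $\bar{\mathfrak m}_i/\bar{\mathfrak m}_i^{2}$ is spanned by the residues of $z_{i+1},\dots,z_{d+r}$, which are therefore nonzero there. Consequently $\mathrm{gr}(\psi_i)$ sends the degree-one class of $z_j$ to the initial form of $z_j$ in $A_i$, and since $\ker\mathrm{gr}(\psi_i)=(z_i)^{*}\mathrm{gr}_{\bar{\mathfrak m}_{i-1}}(A_{i-1})$ these initial forms are exactly the images of $(z_{i+1})^{*},\dots,(z_d)^{*}$, which again form a regular sequence; this is the maintained hypothesis at stage $i$.

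Composing the isomorphisms $\theta_{i}$ for $i=1,\dots,d$ gives
\[
\mathrm{gr}_{\bar{\mathfrak m}}\big(\bar R/\pi_d(I(\Gamma))\big)=\mathrm{gr}_{\bar{\mathfrak m}_d}(A_d)\cong \frac{\mathrm{gr}_{\mathfrak m}(A)}{\big((z_1)^{*},\dots,(z_d)^{*}\big)\mathrm{gr}_{\mathfrak m}(A)}=\frac{\mathrm{gr}_{\mathfrak m}(R/I(\Gamma))}{(z_1,\dots,z_d)\,\mathrm{gr}_{\mathfrak m}(R/I(\Gamma))},
\]
the last equality being the identification of $(z_i)^{*}$ with the degree-one class of $z_i$ inside $\mathrm{gr}_{\mathfrak m}(R/I(\Gamma))=R/I(\Gamma)^{*}$. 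I expect the main obstacle to be exactly the bookkeeping isolated above — ensuring that ``initial form of the image equals image of the initial form'' persists after each quotient — which reduces to the fact that the order of a surviving generator-variable $z_j$ does not jump away from $1$ after killing $z_i$; this is where minimality of the $\mathbf a_j$ and the absence of linear forms in $I(\Gamma)$ are used. Everything else is a formal iteration of Lemma \ref{iso}.
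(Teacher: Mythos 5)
Your argument is correct. Note, however, that the paper does not prove this lemma at all: it simply cites Lemma 3.8 of Saha--Sengupta--Srivastava (\cite{Saha-Associated}), so your write-up supplies a self-contained proof where the paper outsources one. Your route --- peeling off $z_1,\dots,z_d$ one at a time, applying Lemma \ref{iso} at each stage after checking that $(z_i)^{*}$ remains a nonzerodivisor on $\mathrm{gr}_{\bar{\mathfrak m}_{i-1}}(A_{i-1})$ and hence $z_i$ is a nonzerodivisor on $A_{i-1}$ --- is exactly the natural iteration of Herzog's super-regularity lemma, and you correctly isolate the only delicate point: that the initial form of a surviving variable $z_j$ ($j>i$) is its degree-one class at every stage, which you justify from $I(\Gamma)\subseteq\mathfrak m_R^{2}$ (no linear forms, since the $\mathbf a_i$ are distinct minimal generators and $I(\Gamma)$ is $\Gamma$-homogeneous), giving $\bar{\mathfrak m}_i/\bar{\mathfrak m}_i^{2}\cong \mathfrak m_R/(\mathfrak m_R^{2}+(z_1,\dots,z_i))$ so that $z_j$ has order exactly one in $A_i$; this is what makes ``image of the initial form equals initial form of the image'' persist and keeps the truncated sequence $(z_{i+1})^{*},\dots,(z_d)^{*}$ regular in the successive quotients. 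The only points left tacit are routine: separatedness of the $\bar{\mathfrak m}_{i-1}$-adic filtration (clear, since $A_{i-1}$ is positively graded with degree-zero part $\mathbb{K}$, so $\bigcap_n\bar{\mathfrak m}_{i-1}^{\,n}=0$, which your ``$a\neq 0\Rightarrow a^{*}\neq 0$'' step uses) and the identification $\bar R/\pi_d(I(\Gamma))\cong R/(I(\Gamma)+(z_1,\dots,z_d))$. What your approach buys is transparency about where minimality of the generators enters; what the citation buys the paper is brevity, the cited lemma being the same statement in the same setting.
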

	
	\begin{proof}
		See Lemma 3.8 in \cite{Saha-Associated}.
	\end{proof}
	
	\begin{remark}
		Assume $\mathrm{gr}_{\mathfrak{m}}(\mathbb{K}[\Gamma])$ is Cohen-Macaulay and  $\mathrm{dim}(\mathbb{K}[\Gamma])=d$. Then by Lemma \ref{Can-Exist}, $\mathrm{gr}_{\mathfrak{m}}(\mathbb{K}[\Gamma])$ admits a canonical module $\omega_{\mathrm{gr}_{\mathfrak{m}}(\mathbb{K}[\Gamma])} $ and $\omega_{\mathrm{gr}_{\mathfrak{m}}(\mathbb{K}[\Gamma])} \cong \mathrm{Ext}^{r}_R(\mathrm{gr}_{\mathfrak{m}}(\mathbb{K}[\Gamma]),R)$.
	\end{remark}
	
	\smallskip
	
	\begin{theorem}\label{CM}
		Let $(\mathbf{x}^{\mathbf{a}_{1}},\dots,\mathbf{x}^{\mathbf{a}_{d+r}})$ be a reduction ideal of $\mathfrak{m}$. 
		Suppose $\mathbb{K}[\Gamma]$ and $\mathrm{gr}_{\mathfrak{m}}(\mathbb{K}[\Gamma])$ are Cohen-Macaulay. 
		Let $G=\{f_{1},\dots,f_{t},g_{1},\dots,g_{s}\}$ be a minimal Gr\"{o}bner basis of the defining ideal $I(\Gamma)$, with respect to the negative degree reverse 
		lexicographic ordering induced by $z_{d+r} > \dots > z_{d} > \dots >z_{1}$. 
		We assume that $f_{1}, \dots,f_{t}$ are homogeneous and $g_{1},\dots,g_{s}$ 
		are non homogeneous, with respect to the standard gradation on the polynomial 
		ring $\mathbb{K}[z_{1}, \ldots, z_{d+r}]$. If there exists a $j$, $1\leq j \leq d$, such that 
		$z_{j}$ belongs to the support of $g_{l}$, for every $1 \leq l \leq s $.
		Then $\mathbb{K}[\Gamma]$ is nearly Gorenstein if and only if   $\mathrm{gr}_{\mathfrak{m}}(\mathbb{K}[\Gamma]))$ is also nearly Gorenstein.
	\end{theorem}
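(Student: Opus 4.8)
The plan is to kill the length-$d$ regular sequence $z_1,\dots,z_d$ from both rings, to check that the two quotients are \emph{literally the same} Artinian ring, and then to transfer the nearly Gorenstein property back across this reduction. To begin with, since $(\mathbf{x}^{\mathbf{a}_1},\dots,\mathbf{x}^{\mathbf{a}_{d+r}})$ is a reduction of $\mathfrak{m}$ and both $\mathbb{K}[\Gamma]$ and $\mathrm{gr}_{\mathfrak{m}}(\mathbb{K}[\Gamma])$ are Cohen--Macaulay, Lemma~\ref{Cond} gives that $(\mathbf{x}^{\mathbf{a}_1})^{*},\dots,(\mathbf{x}^{\mathbf{a}_d})^{*}$, i.e.\ the images of $z_1,\dots,z_d$, form a regular sequence on $\mathrm{gr}_{\mathfrak{m}}(\mathbb{K}[\Gamma])=R/I(\Gamma)^{*}$; equivalently $z_1,\dots,z_d$ is super regular on $\mathbb{K}[\Gamma]=R/I(\Gamma)$ and in particular a regular sequence there. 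As $\dim\mathbb{K}[\Gamma]=\dim\mathrm{gr}_{\mathfrak{m}}(\mathbb{K}[\Gamma])=d$, this sequence is maximal on each ring and the two quotients are Artinian.

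Next I would use the Gr\"{o}bner data, mirroring the proof of Theorem~\ref{BSP}. For the negative degree reverse lexicographic order with $z_{d+r}>\dots>z_1$, the set of initial forms $G^{*}=\{f_1,\dots,f_t,g_1^{*},\dots,g_s^{*}\}$ is a standard basis of $I(\Gamma)^{*}$, hence generates it, and $\mathrm{in}_{<}(I(\Gamma)^{*})=\mathrm{in}_{<}(I(\Gamma))$ (here $f_i^{*}=f_i$, since the $f_i$ are homogeneous). Because $I(\Gamma)$ is a binomial prime and $G$ is \emph{minimal}, each $g_l$ is a binomial $u_l-v_l$ with $u_l=\mathrm{in}_{<}(g_l)$; as the order picks the monomial of least degree and $g_l$ is not homogeneous, $\deg u_l<\deg v_l$, so $g_l^{*}=u_l$ is a monomial, and the leading terms $\mathrm{in}_{<}(f_1),\dots,\mathrm{in}_{<}(f_t),u_1,\dots,u_s$ are the minimal monomial generators of $\mathrm{in}_{<}(I(\Gamma))$. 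Now take $j$ as in the hypothesis and suppose $z_j\mid u_l$, say $u_l=z_jw$: then $z_jw=u_l\in I(\Gamma)^{*}$, and $z_j$ being a non-zero divisor on $R/I(\Gamma)^{*}$ forces $w\in I(\Gamma)^{*}$, so the monomial $w$ lies in $\mathrm{in}_{<}(I(\Gamma)^{*})=\mathrm{in}_{<}(I(\Gamma))$; but then $u_l=z_jw\in\mathfrak{m}\cdot\mathrm{in}_{<}(I(\Gamma))$ cannot be a minimal generator of $\mathrm{in}_{<}(I(\Gamma))$, a contradiction. Hence $z_j\nmid u_l$, and the support hypothesis then forces $z_j\mid v_l$ for every $l$.

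Let $\pi_d\colon R\to\bar{R}=\mathbb{K}[z_{d+1},\dots,z_{d+r}]$ be the retraction with $\pi_d(z_i)=0$ for $i\le d$. Since $z_j\mid v_l$ we have $\pi_d(v_l)=0$, so $\pi_d(g_l)=\pi_d(u_l)=\pi_d(g_l^{*})$ for every $l$, while $\pi_d(f_i)=\pi_d(f_i^{*})$ trivially; thus $\pi_d(G)=\pi_d(G^{*})$ as sets, and therefore $\pi_d(I(\Gamma))=(\pi_d(G))=(\pi_d(G^{*}))=\pi_d(I(\Gamma)^{*})$ as ideals of $\bar{R}$. Consequently
\[
\mathbb{K}[\Gamma]/(z_1,\dots,z_d)=\bar{R}/\pi_d(I(\Gamma))=\bar{R}/\pi_d(I(\Gamma)^{*})=\mathrm{gr}_{\mathfrak{m}}(\mathbb{K}[\Gamma])/(z_1,\dots,z_d)=:B,
\]
one and the same Artinian ring; by Lemma~\ref{iso1} the right-hand side is also $\mathrm{gr}_{\bar{\mathfrak{m}}}(B)$, so $B$ equals its own associated graded ring, which is consistent with $\pi_d(I(\Gamma))=\pi_d(I(\Gamma)^{*})$ being a homogeneous ideal.

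It then remains to check that $\mathbb{K}[\Gamma]$ is nearly Gorenstein exactly when $B$ is, and likewise $\mathrm{gr}_{\mathfrak{m}}(\mathbb{K}[\Gamma])$ exactly when $B$ is; this follows from the compatibility of the canonical module, and hence of the trace ideal $\mathrm{tr}(\omega)$ governing the nearly Gorenstein condition, with dividing out the regular sequence $z_1,\dots,z_d$ contained in the maximal ideal --- either invoked as a transfer principle for the nearly Gorenstein property along quotients by a regular sequence, or, spelled out, by chasing $\mathrm{tr}(\omega)$ through the isomorphisms $\omega_{\mathbb{K}[\Gamma]}/(z_1,\dots,z_d)\cong\omega_B\cong\omega_{\mathrm{gr}_{\mathfrak{m}}(\mathbb{K}[\Gamma])}/(z_1,\dots,z_d)$ exactly as in the proof of Theorem~\ref{BSP}. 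I expect the decisive step to be the second paragraph --- passing from the Cohen--Macaulayness of $\mathrm{gr}_{\mathfrak{m}}(\mathbb{K}[\Gamma])$ (through Lemma~\ref{Cond}) to the divisibility $z_j\mid v_l$, which rests on the leading terms of a minimal Gr\"{o}bner basis being the minimal generators of the initial ideal and on $z_j$ being a non-zero divisor modulo $I(\Gamma)^{*}$ --- while the transfer across the reduction by $z_1,\dots,z_d$ is the secondary point requiring care.
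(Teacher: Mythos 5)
Your Gr\"{o}bner-theoretic core is sound and runs parallel to the paper's own argument, in places with more detail than the paper supplies (the deduction $z_j\nmid \mathrm{in}_<(g_l)$ from non-zero-divisibility of $z_j$ on $R/I(\Gamma)^{*}$ together with minimality of $G$, and the identity $\pi_d(I(\Gamma))=\pi_d(I(\Gamma)^{*})$, which is the same reduction the paper performs via Lemma~\ref{iso1}; your tacit assumption that the $g_l$ are binomials is also implicit in the paper). The genuine gap is the final step, the one you yourself defer to a ``transfer principle'': there is no two-way transfer of the nearly Gorenstein property across a quotient by a regular sequence. What is available (see \cite{Trace}) is only the descent inclusion $\mathrm{tr}_A(\omega_A)\cdot A/(\underline{z})\subseteq \mathrm{tr}_{A/(\underline{z})}(\omega_{A/(\underline{z})})$, so $A$ nearly Gorenstein implies $A/(\underline{z})$ nearly Gorenstein; the converse is false. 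For example, $A=\mathbb{K}[[t^4,t^9,t^{11},t^{14}]]$ is not nearly Gorenstein (here $\mathrm{PF}=\{5,7,10\}$ and one computes $\Gamma\setminus\mathrm{tr}(\Gamma)=\{0,4\}$, so $t^4\notin\mathrm{tr}(\omega_A)$), yet this semigroup has maximal embedding dimension, so $\mathfrak{m}^2=t^4\mathfrak{m}$ and $A/(t^4)\cong\mathbb{K}[x,y,z]/(x,y,z)^2$, and any Artinian algebra with square-zero maximal ideal of embedding dimension at least $2$ satisfies $\mathrm{tr}(\omega)=\mathfrak{n}$, hence is nearly Gorenstein. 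Thus knowing that $\mathbb{K}[\Gamma]/(z_1,\dots,z_d)$ and $\mathrm{gr}_{\mathfrak{m}}(\mathbb{K}[\Gamma])/(z_1,\dots,z_d)$ are literally the same Artinian ring $B$ does not decide either equivalence ``ring NG $\Leftrightarrow$ $B$ NG''; the property of $B$ simply does not determine that of the $d$-dimensional rings. Your spelled-out fallback has the same defect: from $\omega_A/(\underline{z})\omega_A\cong\omega_B$ one only gets the descent inclusion, because a homomorphism $\omega_B\rightarrow B$ need not lift to a homomorphism $\omega_A\rightarrow A$.

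The paper's proof, by contrast, does not pass through $B$ and back: after the same reduction it links the canonical modules of the two $d$-dimensional rings to each other, writing $\omega_{\mathrm{gr}_{\mathfrak{m}}(\mathbb{K}[\Gamma])}\cong\mathrm{Ext}^{r}_R(\mathrm{gr}_{\mathfrak{m}}(\mathbb{K}[\Gamma]),R)$ and $\omega_{\mathbb{K}[\Gamma]}\cong\mathrm{Ext}^{r}_R(\mathbb{K}[\Gamma],R)$ and connecting them by change-of-rings isomorphisms along the two regular sequences through the common quotient $\bar{R}/\pi_d(I(\Gamma))$, so that the trace ideals are compared inside $\mathbb{K}[\Gamma]$ and $\mathrm{gr}_{\mathfrak{m}}(\mathbb{K}[\Gamma])$ themselves rather than in $B$. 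That identification (equivalently: lifting a presentation of $\omega_B$ over $\bar{R}$ to presentations of $\omega_{\mathbb{K}[\Gamma]}$ and $\omega_{\mathrm{gr}_{\mathfrak{m}}(\mathbb{K}[\Gamma])}$ over $R$ with the same matrices, from which equality of traces follows) is exactly the content your appeal to a general transfer principle omits, and it is what you would need to add to complete the proof.
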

	
	\begin{proof} 
		Let $G=\{f_{1},\dots,f_{t},g_{1},\dots,g_{s}\}$ be a minimal Gr\"{o}bner basis of the defining ideal $I(\Gamma)$, with respect to the negative degree reverse 
		lexicographic ordering induced by $z_{d+r} > \dots > z_{d} > \dots >z_{1}$. 
		When $s=0$, $I(\Gamma)$ is homogeneous ideal and from Remark 2.1 (\cite{Reduction}),  
		$\mathbb{K}[\Gamma] \cong \mathrm{gr}_{\mathfrak{m}}(\mathbb{K}[\Gamma])$. Hence, the 
		result follows directly.
		\medskip
		
		When $s\geq 1$, we have $f_{1}, \dots, f_{t}$ are homogeneous, 
		$g_{1},\dots g_{s}$ are non-homogeneous and $\mathrm{gr}_{\mathfrak{m}}(\mathbb{K}[\Gamma])$ 
		is Cohen-Macaulay, this implies that $z_{1},\dots,z_{d}$ do not divide the 
		$\mathrm{LM}(f_{k})$ 
		and $\mathrm{LM}(g_{l})$ for $k=1,\dots, t$, $l=1,\dots, s$. Moreover, 
		$z_{j} \in \mathrm{supp}(\{g_{1},\dots,g_{s}\})$, for some $1 \leq j \leq d$. Therefore, $z_{j}$ divides a non-leading term of $g_{1},\dots,g_{s}$, for 
		some  $1 \leq j \leq d$.
		\medskip
		
		We consider the map $$\pi_{d}:R=\mathbb{K}[z_{1},\dots,z_{d},\dots,z_{d+r}] \rightarrow \bar{R}=\mathbb{K}[z_{d+1},\dots,z_{d+r}]$$ such that $\pi_{d}(z_{j})=0$, 
		$1 \leq j \leq d$ and $\pi_{d}(z_{j})=z_{j}, d+1 \leq j \leq d+r$. 
		We note that $\pi_{d}(f_{1}),\dots,\pi_{d}(f_{t})$ are either monomials 
		or homogeneous polynomials. Since $z_{j}$ divides a non-leading 
		term of $\{g_{1},\dots,g_{s}\}$ for some $1 \leq j \leq d+r$, we must have 
		that $\pi_{d}(g_{1}),\dots,\pi_{d}(g_{s})$ are the leading monomials of $g_{1},\dots,g_{s}$ respectively. 
		\medskip
		
		Therefore $\{\pi_{d}(f_{1}),\dots,\pi_{d}(f_{t}),\pi_{d}(g_{1}),\dots,\pi_{d}(g_{s})\}$ generates the homogeneous ideal $\pi_{d}(I(\Gamma))$. Hence
		\begin{align*}
			\mathrm{Ext}^{r}_{R}\big(\bar{R}/\pi_{d}(I(\Gamma),R)\big)&\cong \mathrm{Ext}^{r}_{R}(\mathrm{gr}_{\bar{\mathfrak{m}}}\big(\bar{R}/\pi_{d}(I(\Gamma)),R\big) 
		\end{align*}
		where  $\bar{\mathfrak{m}}=\pi_{d}(\mathfrak{m})$. Since $\mathbb{K}[\Gamma],\, \mathrm{gr}_{\mathfrak{m}}(\mathbb{K}[\Gamma])$ are Cohen-Macaulay, by Lemma \ref{Cond} $z_{1},\dots,z_{d}$ are regular in $\mathrm{gr}_{\mathfrak{m}}(R/I(\Gamma))$ , and therefore $z_{1},\dots,z_{d}$ form a super regular sequence in $(R/I(\Gamma))$. By 
		Lemma \ref{iso1}, 
		\[\mathrm{gr}_{\bar{\mathfrak{m}}}\big(\bar{R}/\pi_{d}(I(\Gamma)) \cong \frac{\mathrm{gr}_{\mathfrak{m}}(R/I(\Gamma))}{(z_{1},\dots,z_{d})\mathrm{gr}_{\mathfrak{m}}(R/I(\Gamma))},
		\]  
		therefore $$\mathrm{Ext}^{r}_{R}(\mathrm{gr}_{\bar{\mathfrak{m}}}\big(\bar{R}/\pi_{d}(I(\Gamma),R)\big) \cong  \mathrm{Ext}^{r}_{R}\bigg(\frac{\mathrm{gr}_{\mathfrak{m}}(R/I(\Gamma))}{(z_{1},\dots,z_{d})\mathrm{gr}_{\mathfrak{m}}(R/I(\Gamma))},R\bigg).$$
		$\mathrm{gr}_{\mathfrak{m}}(R/I(\Gamma))$ being Cohen-Macaulay, $z_{1},\dots,z_{d}$ form 
		a regular sequence in $\mathrm{gr}_{\mathfrak{m}}(R/I(\Gamma))$, 
		hence
		$$ \mathrm{Ext}^{r}_{\frac{R}{z_1,\dots,z_d}}\bigg(\frac{\mathrm{gr}_{\mathfrak{m}}(R/I(\Gamma))}{((z_{1},\dots,z_{d})\mathrm{gr}_{\mathfrak{m}}(R/I(\Gamma)},R\bigg)\cong\mathrm{Ext}^{r}_{R}\big(\mathrm{gr}_{\mathfrak{m}}(R/I(\Gamma),R\big).$$
		$R/I(\Gamma)$ being Cohen-Macaulay, $z_{1},\dots,z_{d}$ form a regular sequence 
		in $R/I(\Gamma)$. Hence,
		
		the canonical module of $\mathrm{gr}_{\mathfrak{m}}(R/I(\Gamma)$ is given by
		\begin{align*}
			\omega_{\mathrm{gr}_{\mathfrak{m}}(\frac{R}{I(\Gamma)})} \cong & \mathrm{Ext}^{r}_R(\mathrm{gr}_{\mathfrak{m}}(\frac{R}{I(\Gamma)}),R)\cong  \mathrm{Ext}^{r}_{\frac{R}{z_1,\dots,z_d}}(\frac{\mathrm{gr}_{\mathfrak{m}}(R/I(\Gamma))}{((z_{1},\dots,z_{d})\mathrm{gr}_{\mathfrak{m}}(R/I(\Gamma)},R)\\
			\cong &  \mathrm{Ext}^{r}_{\frac{R}{z_1,\dots,z_d}}(\mathrm{gr}_{\bar{\mathfrak{m}}}\big(\bar{R}/\pi_{d}(I(\Gamma),R)\\
			\cong & \mathrm{Ext}^{r}_{\frac{R}{z_1,\dots,z_d}}((\bar{R}/\pi_{d}(I(\Gamma)),R)\\
			\cong &
			\mathrm{Ext}^{r}_{\frac{R}{z_1,\dots,z_d}}((R/(z_{1},\dots,z_{d},I(\Gamma)),R)\\
			\cong& \mathrm{Ext}^{r}_{R}((R/I(\Gamma)),R)\\
			\cong &
			\omega_{\frac{R}{I(\Gamma)}},
		\end{align*}
		and we have $\mathrm{tr}(\omega_{\frac{R}{I(\Gamma)}})\cong \mathrm{tr}(\omega_{\mathrm{gr}_{\mathfrak{m}}(\frac{R}{I(\Gamma)})})$. Therefore, $\mathrm{gr}_{\mathfrak{m}}(\frac{R}{I(\Gamma)})$ is nearly Gorenstein if and only if $\frac{R}{I(\Gamma)}$ is nearly Gorenstein.

	\end{proof}

	\section{Residue for gluing of numerical semigroups}
	\medspace

	A numerical semigroup $\Gamma$ is a submonoid of $\mathbb{N}$ which have a finite complement in $\mathbb{N}$. Say $\Gamma$ is minimally generated  y $n_1<\dots <n_e$ with $e>1$. We write $\Gamma=\langle n_1,\dots,n_e \rangle$. The number $e$ is called the \emph{embedding dimension} of $\Gamma$. The elements in the set $G(\Gamma)=\mathbb{N}\setminus \Gamma$ is called the gaps of $\Gamma$ and the cardinality of $G(\Gamma)$ is called genus of $\Gamma$, denoted by $g(\Gamma)$. As $|G(\Gamma)|$ is finite, there exists a largest integer $\mathrm{F}(\Gamma)$, called the \emph{Frobenius number} of $\Gamma
	$, such that $\mathrm{F}(\Gamma) \notin \Gamma$. 
	Let $M:=\Gamma \setminus \{0\}$. The element $\nu \in G(\Gamma)$ with $\nu +M \in \Gamma$ are called \emph{pseudo-Frobenius numbers}, denoted by $\mathrm{PF}(\Gamma)$ and its cardinality is known as \emph{type} of $\Gamma$, denoted $\mathrm{type}(\Gamma)$. In semigroup ring $\mathbb{K}[\Gamma]$, the canonical module $\omega_{\mathbb{K}[\Gamma]}$ is the fractionary $\mathbb{K}[\Gamma]$-ideal generated by the elements $t^{\nu}$ with $\nu \in \mathrm{PF}(\Gamma)$, see \cite[Exercise 21.11]{Eisenbud}. Therefore, the Cohen-Macaulay type of $\mathbb{K}[\Gamma]$ is equal to $\mathrm{type}(\Gamma)$. The anti-canonical ideal of $\mathbb{K}[\Gamma]$  is the fractionary ideal $\omega^{-1}_{\mathbb{K}[\Gamma]}=\{x \in Q(\mathbb{K}[\Gamma]): x \cdot \omega_{\mathbb{K}[\Gamma]}\}$, where  $Q(\mathbb{K}[\Gamma])$ is quotient field of $\mathbb{K}[\Gamma]$. Let $\Omega_{\Gamma}$ and $\Omega_{\Gamma}^{-1}$ be the set of exponents of the monomials in $\omega_{\mathbb{K}[\Gamma]}$, and in $\omega_{\mathbb{K}[\Gamma]}^{-1}$ respectively.
	The \emph{trace} of $\Gamma$ as $\mathrm{tr}(\Gamma)=\Omega_{\Gamma}+\Omega_{\Gamma}^{-1}$. It is clear that $\mathrm{tr}(\Gamma)$ is an ideal in $\Gamma$ consisting of the exponents of the monomials in $\mathrm{tr}(\mathbb{K}[\Gamma])$.
	As $\mathbb{K}[\Gamma]/\mathrm{tr}(\omega_{\mathbb{K}[\Gamma]})$ is a finite dimensional vector space with a $\mathbb{K}$-basis given by $\{t^{\gamma}: \gamma \in \Gamma \setminus \mathrm{tr}(\Gamma)\}$. The \emph{residue} of $\Gamma$ is defined as the residue of $\mathbb{K}[\Gamma]$, namely
	$$ \mathrm{res}(\Gamma)=\mathrm{dim}_{\mathbb{K}}\mathbb{K}[\Gamma]/\mathrm{tr}(\omega_{\mathbb{K}[\Gamma]})=|\Gamma \setminus \mathrm{tr}(\Gamma)|.$$
	
	Now, we are ready to answer Question \ref{Ques} in case of gluing of numerical semigroups. Let us first recall  Question \ref{Ques} and definition of gluing of numerical semigroups. 
	
	\medspace
	
	\noindent \textbf{Question \ref{Ques}}
	Given a numerical semigroup $\Gamma$, is it true that $$\mathrm{res}(\Gamma)\leq n(\Gamma)-g(\Gamma)?$$

\smallskip

\begin{definition}[\cite{JCR}]\label{Gluing-Def} 
	{\rm 
		Let $\Gamma_{1} = \Gamma(m_{1},\dots, m_{l})$ and 
		$\Gamma_{2} = \Gamma(n_{1},\dots, n_{k})$ be two numerical semigroups, 
		with $m_{1} < \cdots < m_{l}$ and $n_{1} < \cdots < n_{k}$. 
		Let $\lambda =b_{1}m_{1} +\cdots +b_{l}m_{l} \in \Gamma_{1}$ and 
		$\mu = a_{1}n_{1} +\cdots +a_{k}n_{k} \in\Gamma_{2}$ be two positive 
		integers satisfying $\gcd(\lambda, \mu) = 1 $, with 
		$\lambda \notin \{m_{1}, \dots ,m_{l} \}$, $\mu \notin \{n_{1}, \ldots ,n_{k}\}$ 
		and $\{\mu m_{1},\ldots , \mu m_{l}\}\cap \{\lambda n_{1},\ldots , \lambda n_{k}\} = \phi$. 
		The numerical semigroup $\Gamma_{1}\#_{\lambda,\mu} \Gamma_{2}=\langle \mu  m_{1},\ldots , \mu m_{l}, \lambda n_{1},\ldots, \lambda n_{k}\rangle$ is called a \textit{gluing} of the semigroups $\Gamma_{1}$ and $\Gamma_{2}$ with 
		respect to $\mu $ and $\lambda$.
	}
\end{definition}

Section 3 in \cite{Sahin-Gluing} states that if $\Gamma$ be obtained by gluing 
$\Gamma_{1} = \Gamma(m_{1},\dots, m_{l})$ and 
$\Gamma_{2} = \Gamma(n_{1},\dots, n_{k})$ 
with respect to $\lambda = \sum_{i=1}^{l}b_{i}m_{i}$ and $\mu = \sum_{i=1}^{k}a_{i}n_{i}$,  
and if the defining ideals $I(\Gamma_{1}) \subset \mathbb{K}[x_{1},\ldots, x_{l}]$ and 
$I(\Gamma_{2}) \subset \mathbb{K}[y_{1},\ldots, y_{k}]$ are generated by the sets 
$G_{1} = \{f_{1},\ldots, f_{d}\} $ and $G_{2} = \{g_{1},\ldots, g_{r}\}$ respectively, 
then the defining ideal 
$I(\Gamma)\subset R = \mathbb{K}[x_{1},\ldots, x_{l}, y_{1},\ldots, y_{k}]$ is generated by the set 
$G = G_{1}\cup G_{2}\cup \{\rho\}$, where 
$\rho=x_{1}^{b_{1}}\dots x_{l}^{b_{l}}-y_{1}^{a_{1}}\dots y_{k}^{a_{k}}$.

\medspace	

	\begin{theorem}\label{Gluing-Trace}
		\begin{enumerate}[(i)]
			\item \label{(i)} $\mathrm{tr}(\Gamma_1 \#_{\lambda,\mu}\Gamma_2)=\mu \mathrm{tr}(\Gamma_1)+\lambda \mathrm{tr}(\Gamma_2)$.
			
			\item 	$\mathrm{res}(\Gamma_1 \#_{\lambda,\mu}\Gamma_2)=\mu \mathrm{res}(\Gamma_1)+ \lambda \mathrm{res}(\Gamma_2)$.
		\end{enumerate}
	
	\end{theorem}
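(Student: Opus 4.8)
The plan is to work through the gluing presentation of the semigroup ring. Write $A_i=\mathbb{K}[\Gamma_i]$, $A=A_1\otimes_{\mathbb{K}}A_2$ and $B=\mathbb{K}[\Gamma_1\#_{\lambda,\mu}\Gamma_2]$. From the generating set $G=G_1\cup G_2\cup\{\rho\}$ of $I(\Gamma)$ recalled above one obtains a graded isomorphism $B\cong A/(\rho)$: weighting the $x$-variables by $\mu$ and the $y$-variables by $\lambda$ makes $\rho$ homogeneous of degree $\lambda\mu$, identified with the element $t^{\lambda}\otimes 1-1\otimes t^{\mu}$ of $A$, which is a nonzerodivisor; thus $A$ is a $2$-dimensional Cohen--Macaulay domain and $B$ is a hypersurface section of it. First I would record two standard facts: $\omega_A\cong\omega_{A_1}\otimes_{\mathbb{K}}\omega_{A_2}$ (\cite{Bruns-Herzog}), and, by the K\"unneth formula for $\mathrm{Hom}$ over a tensor product of $\mathbb{K}$-algebras applied to the finitely generated modules $\omega_{A_i}$, $\omega_A^{-1}=\mathrm{Hom}_A(\omega_A,A)\cong\omega_{A_1}^{-1}\otimes_{\mathbb{K}}\omega_{A_2}^{-1}$ as fractional ideals of $A$.

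The core of (i) is that both $\omega_B$ and $\omega_B^{-1}$ are obtained from $\omega_A$, $\omega_A^{-1}$ by base change along $A\to B$. Since $\rho$ is a nonzerodivisor, $\omega_B\cong\omega_A\otimes_A B$ up to a shift in grading. For the anticanonical ideal, adjunction gives $\omega_B^{-1}=\mathrm{Hom}_B(\omega_A\otimes_A B,\,B)\cong\mathrm{Hom}_A(\omega_A,B)$, and the long exact $\mathrm{Ext}$-sequence of $0\to A\xrightarrow{\rho}A\to B\to 0$ gives $\mathrm{Hom}_A(\omega_A,B)\cong\omega_A^{-1}\otimes_A B$ as soon as $\rho$ is a nonzerodivisor on $\mathrm{Ext}^1_A(\omega_A,A)$. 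I expect this to be the one delicate point. By K\"unneth again, $\mathrm{Ext}^1_A(\omega_A,A)\cong\bigl(\mathrm{Ext}^1_{A_1}(\omega_{A_1},A_1)\otimes_{\mathbb{K}}\omega_{A_2}^{-1}\bigr)\oplus\bigl(\omega_{A_1}^{-1}\otimes_{\mathbb{K}}\mathrm{Ext}^1_{A_2}(\omega_{A_2},A_2)\bigr)$; each $\mathrm{Ext}^1_{A_i}(\omega_{A_i},A_i)$ has finite length, being supported on the non-Gorenstein locus of $A_i$ (which is contained in $\{\mathfrak{m}_i\}$), so multiplication by $t^{m}$ for $m\in\Gamma_i\setminus\{0\}$ is nilpotent on it, whereas multiplication by $t^{m}$ is a nonzerodivisor on the torsion-free tensor factor $\omega_{A_j}^{-1}$ with $j\neq i$; since a difference $(\text{nilpotent})-(\text{nonzerodivisor})$ of commuting operators is again a nonzerodivisor, $\rho$ acts as a nonzerodivisor on each summand. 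Granting this, comparing exponents in the weighted grading yields $\Omega_{\Gamma}=\mu\,\Omega_{\Gamma_1}+\lambda\,\Omega_{\Gamma_2}+c$ and $\Omega_{\Gamma}^{-1}=\mu\,\Omega_{\Gamma_1}^{-1}+\lambda\,\Omega_{\Gamma_2}^{-1}-c$ for a single integer $c$; since $\mathrm{tr}(\Gamma)=\Omega_{\Gamma}+\Omega_{\Gamma}^{-1}$ does not depend on the normalization of the canonical module, the shifts cancel and $\mathrm{tr}(\Gamma)=\mu\bigl(\Omega_{\Gamma_1}+\Omega_{\Gamma_1}^{-1}\bigr)+\lambda\bigl(\Omega_{\Gamma_2}+\Omega_{\Gamma_2}^{-1}\bigr)=\mu\,\mathrm{tr}(\Gamma_1)+\lambda\,\mathrm{tr}(\Gamma_2)$, which is (i).

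For (ii) I would deduce the count from (i). Use the bijection $\Gamma_1\times\mathrm{Ap}(\Gamma_2,\mu)\to\Gamma$, $(a,w)\mapsto\mu a+\lambda w$, where $\mathrm{Ap}(\Gamma_2,\mu)=\{s\in\Gamma_2:\,s-\mu\notin\Gamma_2\}$: this is a bijection because $\mu\in\Gamma_2$, $\lambda\in\Gamma_1$ and $\gcd(\lambda,\mu)=1$ (reduce the $\Gamma_2$-coordinate modulo $\mu$ inside $\Gamma_2$, and transfer the resulting multiples of $\mu$, which lie in $\lambda\Gamma_1$ since $\lambda\in\Gamma_1$, into the $\Gamma_1$-coordinate). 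Carrying $\mathrm{tr}(\Gamma)=\mu\,\mathrm{tr}(\Gamma_1)+\lambda\,\mathrm{tr}(\Gamma_2)$ through this bijection, and noting that $\{c\ge0:\,a-\lambda c\in\mathrm{tr}(\Gamma_1)\}$ is downward closed (because $\lambda\in\Gamma_1$ and $\mathrm{tr}(\Gamma_1)$ is an ideal of $\Gamma_1$) while $\{c\ge0:\,w+\mu c\in\mathrm{tr}(\Gamma_2)\}$ is upward closed, one finds that $(a,w)$ lies over $\mathrm{tr}(\Gamma)$ exactly when $a\in\mathrm{tr}(\Gamma_1)$ and $w+\mu\,d_a\in\mathrm{tr}(\Gamma_2)$, where $d_a=\max\{c\ge0:\,a-\lambda c\in\mathrm{tr}(\Gamma_1)\}$. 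Hence $\Gamma\setminus\mathrm{tr}(\Gamma)$ is the disjoint union of $\{(a,w):\,a\notin\mathrm{tr}(\Gamma_1)\}$, which has size $\mu\,\mathrm{res}(\Gamma_1)$ (as $|\mathrm{Ap}(\Gamma_2,\mu)|=\mu$ and $|\Gamma_1\setminus\mathrm{tr}(\Gamma_1)|=\mathrm{res}(\Gamma_1)$), and $\{(a,w):\,a\in\mathrm{tr}(\Gamma_1),\ w+\mu\,d_a\notin\mathrm{tr}(\Gamma_2)\}$. For the second set, $w\mapsto w+\mu\,d_a$ carries $\mathrm{Ap}(\Gamma_2,\mu)$ bijectively onto the layer $\mathrm{Ap}(\Gamma_2,\mu)+\mu\,d_a\subseteq\Gamma_2$, so the fibre over a fixed $a$ has size $r_{d_a}:=|(\mathrm{Ap}(\Gamma_2,\mu)+\mu\,d_a)\setminus\mathrm{tr}(\Gamma_2)|$, and since these layers partition $\Gamma_2$ we get $\sum_{j\ge0}r_j=|\Gamma_2\setminus\mathrm{tr}(\Gamma_2)|=\mathrm{res}(\Gamma_2)$; moreover for each $j$ the $a\in\mathrm{tr}(\Gamma_1)$ with $d_a=j$ are exactly $\bigl(\mathrm{tr}(\Gamma_1)\setminus(\mathrm{tr}(\Gamma_1)+\lambda)\bigr)+\lambda j$, and since an ideal $I$ of a numerical semigroup meets each residue class modulo any $n$ in the semigroup in a unique minimal element we have $|I\setminus(I+n)|=n$, so there are exactly $\lambda$ such $a$. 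Therefore the second set has size $\sum_{j\ge0}\lambda\,r_j=\lambda\,\mathrm{res}(\Gamma_2)$, giving $\mathrm{res}(\Gamma_1\#_{\lambda,\mu}\Gamma_2)=\mu\,\mathrm{res}(\Gamma_1)+\lambda\,\mathrm{res}(\Gamma_2)$.

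The step I expect to be the real obstacle is the homological input to (i): the identification $\mathrm{Hom}_B(\omega_B,B)\cong\omega_A^{-1}\otimes_A B$, i.e., that forming the trace of the canonical module commutes with cutting by the nonzerodivisor $\rho$; if a general lemma on trace ideals and regular elements is available it may be quoted directly, otherwise the K\"unneth-and-nilpotency argument above provides it. Once (i) is in hand, the bookkeeping in (ii) is elementary.
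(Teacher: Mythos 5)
Your proposal is correct in substance, but it takes a genuinely different route from the paper. The paper proves (i) by citing \c{S}ahin--Gold (\cite{Sahin-Gluing}) to the effect that the tensor product of the minimal graded resolutions of $\mathbb{K}[\Gamma_1]$, $\mathbb{K}[\Gamma_2]$ and the Koszul complex on $\rho$ is a minimal free resolution of $\mathbb{K}[\Gamma_1\#_{\lambda,\mu}\Gamma_2]$, dualizing, and then invoking \cite[Proposition 4.1]{Trace} to obtain multiplicativity of the canonical trace, after which the exponent description gives (i); for (ii) it asserts a vector-space isomorphism between $\mathbb{K}[\Gamma_1\#_{\lambda,\mu}\Gamma_2]/\mathrm{tr}(\omega)$ and a combination of the two smaller quotients. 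You instead realize the glued ring as the hypersurface section $B=A/(\rho)$ of $A=\mathbb{K}[\Gamma_1]\otimes_{\mathbb{K}}\mathbb{K}[\Gamma_2]$ and show that both $\omega$ and $\omega^{-1}$ specialize along $A\to B$, correctly isolating and settling the one delicate homological point, namely injectivity of $\rho$ on $\mathrm{Ext}^1_A(\omega_A,A)$: the K\"unneth decomposition together with the fact that each $\mathrm{Ext}^1_{A_i}(\omega_{A_i},A_i)$ has finite length (it vanishes at the generic point) makes $\rho$ act as a nilpotent operator minus a commuting injective one, hence injectively. Your deduction of (ii) from (i) via the Ap\'ery-set bijection $\Gamma_1\times\mathrm{Ap}(\Gamma_2,\mu)\to\Gamma_1\#_{\lambda,\mu}\Gamma_2$, the downward/upward closedness of the two membership conditions, and the count $|\mathrm{tr}(\Gamma_1)\setminus(\mathrm{tr}(\Gamma_1)+\lambda)|=\lambda$ is sound, and it supplies explicitly the enumeration that the paper's asserted isomorphism leaves implicit. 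What each approach buys: the paper's argument is short but rests on the minimality theorem of \cite{Sahin-Gluing} and on \cite[Proposition 4.1]{Trace}, while yours is self-contained standard homological algebra plus semigroup combinatorics and makes the counting in (ii) fully rigorous. The only step you should flesh out is the passage from the graded isomorphisms $\omega_B\cong\omega_A\otimes_AB$ and $\omega_B^{-1}\cong\omega_A^{-1}\otimes_AB$ to the equalities of exponent sets $\Omega_{\Gamma}=\mu\Omega_{\Gamma_1}+\lambda\Omega_{\Gamma_2}+c$ and $\Omega_{\Gamma}^{-1}=\mu\Omega_{\Gamma_1}^{-1}+\lambda\Omega_{\Gamma_2}^{-1}-c$: note that the minimal generators of $\omega_A$ and $\omega_A^{-1}$ survive modulo $\rho$ by Nakayama, and that both specializations are torsion-free of rank one over the domain $B$, so each exponent set is exactly the set of generator degrees plus $\Gamma$, which matches the stated Minkowski sums because $\Gamma=\mu\Gamma_1+\lambda\Gamma_2$.
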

	
	\begin{proof}
	Let $ \mathbb{F}: 0 \rightarrow  F_k \xrightarrow {\phi_{k}} \cdots \xrightarrow {\phi_2} F_1  \xrightarrow {\phi_1} F_0  $ be a minimal $\Gamma_1$-graded free resolution of $I_{\Gamma_1}$ with $H_0= R/I_{\Gamma_1}$, $ \mathbb{G}: 0 \rightarrow  G_l \xrightarrow {\psi_{k}} \cdots \xrightarrow {\psi_2} G_1  \xrightarrow {\psi_1} G_0  $ be a minimal $\Gamma_2$-graded free resolution of $I_{\Gamma_1}$ with $H_0= R/I_{\Gamma_2}$ and $\mathbb{H}_{\rho}: 0 \rightarrow R \xrightarrow {\rho} R \rightarrow 0  $. From \cite[Theorem 3.2]{Sahin-Gluing}  we know that $\mathbb{H}_{\rho} \otimes \mathbb{F} \otimes \mathbb{G}$ give a minimal free resolution of $I_{\Gamma_1 \#_{\lambda,\mu} \Gamma_2 }$. By \cite[Corollary 3.9]{Bruns-Herzog}, the dual complex $(\mathbb{H}_{\rho} \otimes \mathbb{F} \otimes \mathbb{G})^{\ast}$ is a minimal free resolution of $\omega_{\mathbb{K}[{\Gamma_1 \#_{\lambda,\mu} \Gamma_2 }]}$. From the isomorphism $\mathbb{H}_{\rho}^{\ast} \otimes \mathbb{F}^{\ast} \otimes \mathbb{{G}}^{\ast} \cong (\mathbb{H}_{\rho} \otimes \mathbb{F} \otimes \mathbb{G})^{\ast}$ we can deduce that $\mathbb{H}_{\rho}^{\ast} \otimes \mathbb{F}^{\ast} \otimes \mathbb{{G}}^{\ast}$ is exact and it resolves $\omega_{\rho R} \otimes \omega_{\mathbb{K}[\Gamma_1]} \otimes \omega_{\mathbb{K}[\Gamma_2]}$ minimally. By \cite[Proposition 4.1]{Trace}, we have $\mathrm{tr}(\omega_{\mathbb{K}[\Gamma_1 \#_{\lambda,\mu} \Gamma_2  ]})=\mathrm{tr}(\omega_{\rho R}) \cdot \mathrm{tr}(\omega_{\mathbb{K}[\Gamma_1]} \cdot \mathrm{tr} ( \omega_{\mathbb{K}[\Gamma_2]})$, and hence the exponent of monomials in $\mathrm{tr}(\omega_{\mathbb{K}[\Gamma_1 \#_{\lambda,\mu} \Gamma_2  ]})$ are of the form $ \mu \mathrm{tr}(\Gamma_1)+\lambda \mathrm{tr}(\Gamma_2)$, so semigroup ideal $ \mathrm{tr}(\Gamma_1 \#_{\lambda,\mu}\Gamma_2)=\mu \mathrm{tr}(\Gamma_1)+\lambda \mathrm{tr}(\Gamma_2)$. Note that $\mathbb{K}[H]/\mathrm{tr}(\omega_{\mathbb{K}[H]}))$ forms a vector space over $\mathbb{K}$ with a $\mathbb{K}$-basis $\{t^h :H \setminus \mathrm{tr}(H)\}$ for any numerical semigroup $H$ and $\mathrm{dim}_{\mathbb{K}}\mathbb{K}[H]/\mathrm{tr}(\omega_{\mathbb{K}[H]}))=|H \setminus \mathrm{tr}(H)|$. Consider a vector space homomorphism $\eta :\mu \cdot \mathbb{K}[\Gamma_1]/\mathrm{tr}(\omega_{\mathbb{K}[\Gamma_1]})) \oplus \lambda \cdot \mathbb{K}[\Gamma_2]/\mathrm{tr}(\omega_{\mathbb{K}[\Gamma_2]})) \rightarrow \mathbb{K}[\Gamma_1 \#_{\lambda,\mu}\Gamma_2]/\mathrm{tr}(\omega_{\mathbb{K}[\Gamma_1 \#_{\lambda,\mu}\Gamma_2]}))$ defined by $\eta(\mu t^{ s_1}+\lambda t^{s_2})=t^{\mu s_1+\lambda s_2}$. From (\ref{(i)}), it is clear that $\eta$ is a vector space isomorphism, and  $\mathrm{dim}_{\mathbb{K}}\frac{\mathbb{K}[\Gamma_1 \#_{\lambda,\mu}\Gamma_2]}{\mathrm{tr}(\omega_{\mathbb{K}[\Gamma_1 \#_{\lambda,\mu}\Gamma_2]}))}=\mu \mathrm{dim}_{\mathbb{K}}\frac{\mathbb{K}[\Gamma_1] }{\mathrm{tr}(\omega_{\mathbb{K}[\Gamma_1]}}+ \lambda \mathrm{dim}_{\mathbb{K}}\frac{\mathbb{K}[\Gamma_2]}{\mathrm{tr}(\omega_{\mathbb{K}[\Gamma_2]})}$. Therefore, $\mathrm{res}(\Gamma_1 \#_{\lambda,\mu}\Gamma_2)=\mu \mathrm{res}(\Gamma_1)+ \lambda \mathrm{res}(\Gamma_2)$.
	\end{proof}
	
	\smallskip
	
	\begin{corollary}
		Let $\mathbb{K}[\Gamma_{1}]$ and $\mathbb{K}[\Gamma_{2}]$ are two nearly Gorenstein rings. Then $\mathbb{K}[\Gamma]$ is never nearly Gorenstein, where $\Gamma$ is a gluing of $\Gamma_1$ and $\Gamma_2$.
	\end{corollary}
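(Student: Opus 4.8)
The plan is to combine part (ii) of Theorem~\ref{Gluing-Trace} with the elementary observation that, for a numerical semigroup $H$, the (automatically Cohen--Macaulay) ring $\mathbb{K}[H]$ is nearly Gorenstein if and only if $\mathrm{res}(H)\le 1$. First I would establish this characterization. Since $\omega_{\mathbb{K}[H]}$ and every $\mathbb{K}[H]$-linear map $\omega_{\mathbb{K}[H]}\to\mathbb{K}[H]$ are homogeneous, the ideal $\mathrm{tr}(\omega_{\mathbb{K}[H]})$ is graded; hence, if $\mathrm{res}(H)=\dim_{\mathbb{K}}\mathbb{K}[H]/\mathrm{tr}(\omega_{\mathbb{K}[H]})\le 1$, its degree-$0$ part is $\mathbb{K}$ or $0$, which forces $\mathrm{tr}(\omega_{\mathbb{K}[H]})$ to be either $\mathbb{K}[H]$ (so $\mathrm{res}(H)=0$ and $\mathbb{K}[H]$ is Gorenstein) or the maximal ideal $\mathfrak m$ (so $\mathrm{res}(H)=1$). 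In both cases $\mathfrak m\subseteq\mathrm{tr}(\omega_{\mathbb{K}[H]})$, that is, $\mathbb{K}[H]$ is nearly Gorenstein; conversely, $\mathfrak m\subseteq\mathrm{tr}(\omega_{\mathbb{K}[H]})$ makes $\mathbb{K}[H]/\mathrm{tr}(\omega_{\mathbb{K}[H]})$ a quotient of $\mathbb{K}[H]/\mathfrak m=\mathbb{K}$, so $\mathrm{res}(H)\le 1$.

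Next I would record that in Definition~\ref{Gluing-Def} one always has $\lambda\ge 2$ and $\mu\ge 2$: the integer $\lambda$ is a positive element of $\Gamma_1$ that is not among the minimal generators $m_1,\dots,m_l$, and $\lambda=1$ would force $\Gamma_1=\langle 1\rangle$ and hence $\lambda=m_1$, a contradiction; the argument for $\mu$ is symmetric. Now assume $\mathbb{K}[\Gamma_1]$ and $\mathbb{K}[\Gamma_2]$ are nearly Gorenstein, so that $\mathrm{res}(\Gamma_1),\mathrm{res}(\Gamma_2)\in\{0,1\}$ by the first step. If both of them are symmetric, then part (ii) of Theorem~\ref{Gluing-Trace} gives $\mathrm{res}(\Gamma)=0$ and $\mathbb{K}[\Gamma]$ is itself Gorenstein, so in the statement ``nearly Gorenstein'' is to be understood in the strict (non-Gorenstein) sense, consistent with the fact that a gluing of symmetric numerical semigroups is again symmetric. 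Otherwise at least one of $\mathrm{res}(\Gamma_1),\mathrm{res}(\Gamma_2)$ equals $1$, and part (ii) of Theorem~\ref{Gluing-Trace} gives
\[
\mathrm{res}(\Gamma)=\mu\,\mathrm{res}(\Gamma_1)+\lambda\,\mathrm{res}(\Gamma_2)\ \ge\ \min\{\mu,\lambda\}\ \ge\ 2 ,
\]
so $\mathrm{res}(\Gamma)\ge 2>1$ and, by the characterization of the first step, $\mathbb{K}[\Gamma]$ is not nearly Gorenstein.

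Essentially all the content is already in part (ii) of Theorem~\ref{Gluing-Trace}; once that multiplicative residue formula is available the corollary becomes a short arithmetic deduction, the only points needing care being the lower bounds $\lambda,\mu\ge 2$ extracted from Definition~\ref{Gluing-Def} and the reading of ``nearly Gorenstein'' indicated above.
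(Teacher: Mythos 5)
Your proof is correct and takes essentially the same route as the paper: combine the residue formula of Theorem~\ref{Gluing-Trace}(ii) with the characterization that $\mathbb{K}[H]$ is nearly Gorenstein if and only if $\mathrm{res}(H)\le 1$, the only extra ingredients being your (easy but necessary) verification of that characterization and of the bound $\lambda,\mu\ge 2$ from Definition~\ref{Gluing-Def}, both of which the paper leaves implicit. Your remark about the case where $\Gamma_1$ and $\Gamma_2$ are both symmetric is a genuine point: with the paper's definition (under which Gorenstein implies nearly Gorenstein), the gluing then has $\mathrm{res}=0$ and is Gorenstein, so the corollary as literally stated fails in that case and requires either your strict reading of ``nearly Gorenstein'' or the extra hypothesis that at least one $\Gamma_i$ is not symmetric --- an issue the paper's one-line proof does not address.
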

	\begin{proof}
		The proof follows directly from Theorem \ref{Gluing-Trace} and the fact that $\mathbb{K}[\Gamma]$ is nearly Gorenstein if and only if $\mathrm{res}(\Gamma)\leq 1$.
	\end{proof}
	
	\smallskip
	
	\begin{lemma}\label{PF-Gluing}
		Assuming the notation of current section,
		\begin{enumerate}[(i)]
			\item $\mathrm{PF}(\Gamma_1 \#_{\lambda,\mu}\Gamma_2)=\{\mu g +\lambda g'+ \mu \lambda : g \in \mathrm{PF}(\Gamma_1), g' \in \mathrm{PF}(\Gamma_2)\}$.
			\item $\mathrm{F}(\Gamma_1 \#_{\lambda,\mu}\Gamma_2)=\mu  \mathrm{F}(\Gamma_1) +\lambda \mathrm{F}(\Gamma_2)+ \mu \lambda$.
		\end{enumerate}
	\end{lemma}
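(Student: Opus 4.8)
The plan is to prove (i) first; part (ii) then drops out, since for every numerical semigroup $H$ one has $\mathrm{F}(H)=\max\mathrm{PF}(H)$, so with $\mu,\lambda>0$ and (i),
\[
\mathrm{F}(\Gamma_{1}\#_{\lambda,\mu}\Gamma_{2})=\max_{g\in\mathrm{PF}(\Gamma_{1}),\ g'\in\mathrm{PF}(\Gamma_{2})}\bigl(\mu g+\lambda g'+\mu\lambda\bigr)=\mu\,\mathrm{F}(\Gamma_{1})+\lambda\,\mathrm{F}(\Gamma_{2})+\mu\lambda .
\]
For (i) I would set $\Gamma=\Gamma_{1}\#_{\lambda,\mu}\Gamma_{2}$, $M=\Gamma\setminus\{0\}$, and record three elementary facts used throughout: every element of $\Gamma$ has the form $\mu s_{1}+\lambda s_{2}$ with $s_{1}\in\Gamma_{1}$, $s_{2}\in\Gamma_{2}$ (expand a representation in the generators $\mu m_{i},\lambda n_{j}$ of $\Gamma$); conversely $\mu\Gamma_{1}\subseteq\Gamma$ and $\lambda\Gamma_{2}\subseteq\Gamma$; and a gap $\nu$ of $\Gamma$ lies in $\mathrm{PF}(\Gamma)$ as soon as $\nu+\mathbf{a}\in\Gamma$ for every minimal generator $\mathbf{a}$ of $\Gamma$.

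For the inclusion $\supseteq$, fix $g\in\mathrm{PF}(\Gamma_{1})$, $g'\in\mathrm{PF}(\Gamma_{2})$ and put $\nu=\mu g+\lambda g'+\mu\lambda$. To see $\nu\notin\Gamma$: if $\nu=\mu s_{1}+\lambda s_{2}$ with $s_{1}\in\Gamma_{1}$, $s_{2}\in\Gamma_{2}$, then $\mu(g+\lambda-s_{1})=\lambda(s_{2}-g')$, and $\gcd(\mu,\lambda)=1$ forces $g-s_{1}=\lambda c$ and $s_{2}-g'=\mu(c+1)$ for some $c\in\mathbb{Z}$; if $c\ge 0$ then $g=s_{1}+\lambda c\in\Gamma_{1}$, and if $c\le -1$ then $g'=s_{2}+\mu(-c-1)\in\Gamma_{2}$, either way contradicting a pseudo-Frobenius assumption. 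To test the generators: for $\mathbf{a}=\mu m_{i}$ write $\nu+\mathbf{a}=\mu(g+m_{i})+\lambda(g'+\mu)$, and for $\mathbf{a}=\lambda n_{j}$ write $\nu+\mathbf{a}=\mu(g+\lambda)+\lambda(g'+n_{j})$; in both cases the two summands lie in $\mu\Gamma_{1}$ and $\lambda\Gamma_{2}$ respectively, so $\nu+\mathbf{a}\in\Gamma$. Hence $\nu\in\mathrm{PF}(\Gamma)$.

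For the reverse inclusion I would argue by cardinality. The map $(g,g')\mapsto\mu g+\lambda g'+\mu\lambda$ is injective on $\mathrm{PF}(\Gamma_{1})\times\mathrm{PF}(\Gamma_{2})$: from $\mu g_{1}+\lambda g_{1}'=\mu g_{2}+\lambda g_{2}'$ coprimality gives $\lambda\mid g_{1}-g_{2}$, and a nonzero multiple would force $g_{1}\in\Gamma_{1}$ (adding the nonzero element $\lambda c$ of $\Gamma_{1}$ to the pseudo-Frobenius number $g_{2}$), so $g_{1}=g_{2}$ and then $g_{1}'=g_{2}'$. Thus the right-hand side of (i) has exactly $\mathrm{type}(\Gamma_{1})\cdot\mathrm{type}(\Gamma_{2})$ elements. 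On the other hand, the minimal free resolution of $\mathbb{K}[\Gamma]$ obtained in \cite[Theorem 3.2]{Sahin-Gluing} as a tensor product (already invoked in the proof of Theorem \ref{Gluing-Trace}) has last Betti number $1\cdot\mathrm{type}(\Gamma_{1})\cdot\mathrm{type}(\Gamma_{2})$, which equals $\mathrm{type}(\Gamma)=|\mathrm{PF}(\Gamma)|$ since $\mathbb{K}[\Gamma]$ is Cohen-Macaulay. A finite set containing the right-hand side of (i) and of the same cardinality must coincide with it, proving (i), and hence (ii).

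The main obstacle is the non-membership $\nu\notin\Gamma$ together with the dual injectivity statement: both hinge on using $\gcd(\mu,\lambda)=1$ to turn an identity $\mu(\cdots)=\lambda(\cdots)$ into a divisibility and then choosing the sign of the resulting integer correctly so that the offending element is pushed back into $\Gamma_{1}$ or $\Gamma_{2}$. A longer but more self-contained route, avoiding the Betti-number input for $\subseteq$, would be to first prove the classical gluing identity $\mathrm{Ap}(\Gamma,\mu\lambda)=\mu\,\mathrm{Ap}(\Gamma_{1},\lambda)+\lambda\,\mathrm{Ap}(\Gamma_{2},\mu)$ and then check that, under this bijection, the order $\le_{\Gamma}$ on $\mathrm{Ap}(\Gamma,\mu\lambda)$ is the product of $\le_{\Gamma_{1}}$ on $\mathrm{Ap}(\Gamma_{1},\lambda)$ and $\le_{\Gamma_{2}}$ on $\mathrm{Ap}(\Gamma_{2},\mu)$; combined with the standard descriptions $\mathrm{PF}(H)=\{w-n:\ w\ \le_{H}\text{-maximal in}\ \mathrm{Ap}(H,n)\}$ and, for $\Gamma_{i}$, $\{w:\ w\ \le_{\Gamma_{i}}\text{-maximal in}\ \mathrm{Ap}(\Gamma_{i},c)\}=\{g+c:\ g\in\mathrm{PF}(\Gamma_{i})\}$, this yields (i) directly.
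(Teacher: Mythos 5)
Your proposal is correct, but it is a genuinely different route from the paper's: the paper does not prove Lemma \ref{PF-Gluing} at all, it simply cites Nari \cite[Proposition 6.6]{PF-Gluing}, whose original argument is purely semigroup-theoretic (essentially the Ap\'ery-set route you sketch at the end: $\mathrm{Ap}(\Gamma,\mu\lambda)=\mu\,\mathrm{Ap}(\Gamma_1,\lambda)+\lambda\,\mathrm{Ap}(\Gamma_2,\mu)$ together with the identification of $\mathrm{PF}$ with maximal Ap\'ery elements). What you do instead is a hybrid: a direct elementary verification of the inclusion $\supseteq$ (the coprimality argument turning $\mu(g+\lambda-s_1)=\lambda(s_2-g')$ into a sign dichotomy is correct, as is the check against the generators $\mu m_i,\lambda n_j$ using $\mu\Gamma_1,\lambda\Gamma_2\subseteq\Gamma$), followed by a counting argument: injectivity of $(g,g')\mapsto \mu g+\lambda g'+\mu\lambda$ gives $|\mathrm{RHS}|=\mathrm{type}(\Gamma_1)\mathrm{type}(\Gamma_2)$, and the \c{S}ahin--Gold tensor resolution already invoked in Theorem \ref{Gluing-Trace} gives $|\mathrm{PF}(\Gamma_1\#_{\lambda,\mu}\Gamma_2)|=\mathrm{type}(\Gamma_1)\mathrm{type}(\Gamma_2)$ via the last Betti number, so the two finite sets coincide; part (ii) then follows from $\mathrm{F}(H)=\max\mathrm{PF}(H)$ since $\mu,\lambda>0$. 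This is sound: the only nontrivial external input, minimality of $\mathbb{H}_\rho\otimes\mathbb{F}\otimes\mathbb{G}$ and ``last Betti number $=$ Cohen--Macaulay type $=|\mathrm{PF}|$'', is exactly the machinery the paper already uses in Theorem \ref{Gluing-Trace}, so your proof is self-contained relative to this paper, at the cost of being less elementary than Nari's combinatorial argument; conversely, the citation (or your Ap\'ery-set alternative) avoids any homological input and yields the explicit bijection $\mathrm{PF}(\Gamma_1)\times\mathrm{PF}(\Gamma_2)\to\mathrm{PF}(\Gamma_1\#_{\lambda,\mu}\Gamma_2)$ without a separate cardinality count.
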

	
	\begin{proof}
		See Proposition 6.6 in \cite{PF-Gluing}.
	\end{proof}
	
	\medspace
	
Set $C_H=t^{\mathrm{F}(H)}\omega_{\mathbb{K}[H]}=\sum_{\alpha \in \mathrm{PF}(H)} \mathbb{K}[H]t^{\mathrm{F}(H)-\alpha}$ for any numerical semigroup $H$. Then we have $\mathbb{K}[H] \subseteq C_H \subseteq \overline{\mathbb{K}[H]}$, where
$\overline{\mathbb{K}[H]}$ denotes the integral closure of $\mathbb{K}[H]$. By Lemma \ref{PF-Gluing}, we have
$C_{\Gamma_1 \#_{\lambda,\mu}\Gamma_2}=t^{\mathrm{F}(\Gamma_1 \#_{\lambda,\mu}\Gamma_2)}\omega_{\mathbb{K}[{\Gamma_1 \#_{\lambda,\mu} \Gamma_2 }]}= \sum_{g\in \mathrm{PF}(\Gamma_1),g' \in \mathrm{PF}(\Gamma_2)}\mathbb{K}[{\Gamma_1 \#_{\lambda,\mu} \Gamma_2 }]t^{\mu \mathrm{F}(\Gamma_1)-\mu g+ \lambda \mathrm{F}(\Gamma_2)-\lambda g'}$. Hence $\mathrm{dim}_{\mathbb{K}} \frac{C_{\Gamma_1 \#_{\lambda,\mu}\Gamma_2}}{\mathbb{K}[\Gamma_1 \#_{\lambda,\mu}\Gamma_2]}=\mu \mathrm{dim}_{\mathbb{K}}\frac{C_{\Gamma_1}}{\mathbb{K}[\Gamma_1]}+\lambda \mathrm{dim}_{\mathbb{K}}\frac{C_{\Gamma_2}}{\mathbb{K}[\Gamma_2]}$. By \cite[Lemma 2.2]{Upper bound}, $g(\Gamma_1\#_{\lambda,\mu} \Gamma_2)-n(\Gamma_1\#_{\lambda,\mu} \Gamma_2)=l_{\mathbb{K}[\Gamma_1\#_{\lambda,\mu} \Gamma_2]}(\frac{C_{\Gamma_1\#_{\lambda,\mu} \Gamma_2}}{\mathbb{K}[\Gamma_1\#_{\lambda,\mu} \Gamma_2]})=\mu l_{\mathbb{K}[\Gamma_1\#_{\lambda,\mu} \Gamma_2]}(\frac{C_{\Gamma_1}}{\mathbb{K}[\Gamma_1]})+
\lambda l_{\mathbb{K}[\Gamma_1\#_{\lambda,\mu} \Gamma_2]}(\frac{C_{\Gamma_2}}{\mathbb{K}[\Gamma_2]})$. Now if $\Gamma_1,\Gamma_2$ satisfies Question \ref{Ques}, then $\mathrm{res}(\Gamma_1 \#_{\lambda,\mu} \Gamma_2)=\mu \mathrm{res}(\Gamma_1)+\lambda \mathrm{res}(\Gamma_2)\leq \mu(g(\Gamma_1)-n(\Gamma_1))+\lambda(g(\Gamma_2)-n(\Gamma_2))=g(\Gamma_1 \#_{\lambda,\mu}\Gamma_2)-n(\Gamma_1 \#_{\lambda,\mu}\Gamma_2)$, hence $\Gamma_1 \#_{\lambda,\mu}\Gamma_2$ also satisfies Question \ref{Ques}.

\medspace

	\subsection{Residue of lifting of monomial curve}\label{lift}
	
	Let $\Gamma$ be a numerical semigroup minimally generated by $n_1 < \dots <n_r$. By a $k$-lifting $S_k$ of $\Gamma$ we mean
	the numerical semigroup minimally generated by $n_1,kn_2,\dots,kn_r$, where $k$ is a positive integer with $\mathrm{gcd}(k, n_1) = 1$. 
	Let $A=\mathbb{K}[x_1,\dots,x_r]$ and $B=\mathbb{K}[y_1,\dots,y_r]$, and we set $\mathrm{degree}(x_i)=n_i$ for $ 1 \leq i \leq r, \mathrm{deg}(y_i)=kn_i$ for $2 \leq i \leq r-1$ and $\mathrm{deg}(y_1)=n_1$. In this subsection denote semigroup rings by $R=\mathbb{K}[\Gamma] $ and $R_k=\mathbb{K}[\Gamma_k] $
	
	\begin{theorem}\label{Lift-Can}
		$\mathrm{tr}(\Gamma_k)=k\cdot\mathrm{tr}(\Gamma)$
	\end{theorem}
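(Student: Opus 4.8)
The plan is to describe $\Gamma_{k}$ completely in terms of $\Gamma$ and then propagate that description through the canonical and anti-canonical ideals, using $\mathrm{tr}(\Gamma)=\Omega_{\Gamma}+\Omega_{\Gamma}^{-1}$. The structural fact behind everything is: since $\gcd(k,n_{1})=1$, every integer $\ell$ has a unique expression $\ell=an_{1}+km$ with $a\in\{0,1,\dots,k-1\}$, and for such an expression $\ell\in\Gamma_{k}$ if and only if $m\in\Gamma$. This is immediate from $\Gamma_{k}=\langle n_{1}\rangle+k\langle n_{2},\dots,n_{r}\rangle$ and $\Gamma=\langle n_{1}\rangle+\langle n_{2},\dots,n_{r}\rangle$ together with $\langle n_{1}\rangle=\{0,n_{1},\dots,(k-1)n_{1}\}+k\langle n_{1}\rangle$, which gives $\Gamma_{k}=\{\,in_{1}+k\gamma : 0\le i\le k-1,\ \gamma\in\Gamma\,\}$ with each element uniquely so written; the coprimality hypothesis is used only here. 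First I would record this, along with the consequences $\mathrm{F}(\Gamma_{k})=k\,\mathrm{F}(\Gamma)+(k-1)n_{1}$ (maximize $an_{1}+km$ over $0\le a\le k-1$, $m\notin\Gamma$) and $\mathrm{PF}(\Gamma_{k})=\{\,km+(k-1)n_{1} : m\in\mathrm{PF}(\Gamma)\,\}$, whence $\mathrm{type}(\Gamma_{k})=\mathrm{type}(\Gamma)$; for the last point, if $\tilde{\mu}=an_{1}+km\in\mathrm{PF}(\Gamma_{k})$ then $\tilde{\mu}+n_{1}\in\Gamma_{k}$ forces $a=k-1$, and the remaining conditions $\tilde{\mu}+\delta\in\Gamma_{k}$ for $\delta\in\Gamma_{k}\setminus\{0\}$ become, via the unique-representation device, exactly $m+(\Gamma\setminus\{0\})\subseteq\Gamma$, i.e. $m\in\mathrm{PF}(\Gamma)$.

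Next I would carry this through the canonical module. Writing $\omega_{\mathbb{K}[\Gamma]}=\sum_{\nu\in\mathrm{PF}(\Gamma)}\mathbb{K}[\Gamma]t^{\nu}$ with exponent set $\Omega_{\Gamma}=\bigcup_{\nu\in\mathrm{PF}(\Gamma)}(\nu+\Gamma)$, the description of $\mathrm{PF}(\Gamma_{k})$ and the identity $\Gamma_{k}=\{in_{1}+k\gamma\}$ give, after reducing the $n_{1}$-coefficient modulo $k$,
\[
\Omega_{\Gamma_{k}}=\{\,in_{1}+k\gamma : 0\le i\le k-1,\ \gamma\in\Omega_{\Gamma}\,\}.
\]
Dualizing, one uses that $\Omega_{\Gamma}^{-1}=\{z : z+\Omega_{\Gamma}\subseteq\Gamma\}$ is stable under adding elements of $\Gamma$, so a coefficient $a\ge k$ can always be normalized into $\{0,\dots,k-1\}$ at the cost of replacing $\gamma$ by $n_{1}+\gamma$; the same bookkeeping yields
\[
\Omega_{\Gamma_{k}}^{-1}=\{\,in_{1}+k\omega_{0} : 0\le i\le k-1,\ \omega_{0}\in\Omega_{\Gamma}^{-1}\,\}.
\]
These two identities are the technical heart of the argument, each being a short but careful manipulation of the unique representations $\ell=an_{1}+km$.

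Adding the two sets and using $\mathrm{tr}(\Gamma)=\Omega_{\Gamma}+\Omega_{\Gamma}^{-1}$ gives
\[
\mathrm{tr}(\Gamma_{k})=\Omega_{\Gamma_{k}}+\Omega_{\Gamma_{k}}^{-1}=\{\,cn_{1}+k\tau : 0\le c\le 2k-2,\ \tau\in\mathrm{tr}(\Gamma)\,\},
\]
and since $\mathrm{tr}(\Gamma)$ is a $\Gamma$-ideal, every term with $c\ge k$ rewrites as $(c-k)n_{1}+k(n_{1}+\tau)$ with $n_{1}+\tau\in\mathrm{tr}(\Gamma)$, so the right side collapses to $\{\,cn_{1}+k\tau : 0\le c\le k-1,\ \tau\in\mathrm{tr}(\Gamma)\,\}$, which is exactly the $\Gamma_{k}$-ideal generated by $\{\,k\tau : \tau\in\mathrm{tr}(\Gamma)\,\}$, namely $k\cdot\mathrm{tr}(\Gamma)$; this completes the argument. (From the unique representation one also gets a bijection $\Gamma_{k}\cong\{0,\dots,k-1\}\times\Gamma$ matching $\mathrm{tr}(\Gamma_{k})$ with $\{0,\dots,k-1\}\times\mathrm{tr}(\Gamma)$, hence $\mathrm{res}(\Gamma_{k})=k\,\mathrm{res}(\Gamma)$, which is what feeds into inequality~(1.1).) I expect the only genuine difficulty to be organizational: keeping the decompositions $\ell=an_{1}+km$ consistent across the canonical/anti-canonical duality and handling the carry $a\ge k\mapsto a-k$, $\gamma\mapsto n_{1}+\gamma$ uniformly; there is no input beyond $\gcd(k,n_{1})=1$ and the definition of the trace through $\Omega_{\Gamma}$ and $\Omega_{\Gamma}^{-1}$. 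An alternative, closer in spirit to Theorems~\ref{BSP} and~\ref{CM}, would be to invoke \c{S}ahin's lifting of the minimal free resolution of $\mathbb{K}[\Gamma]$ to that of $\mathbb{K}[\Gamma_{k}]$, dualize to identify $\omega_{\mathbb{K}[\Gamma_{k}]}$ with the corresponding lift of $\omega_{\mathbb{K}[\Gamma]}$, and read off the trace; but the combinatorial route above seems the most transparent.
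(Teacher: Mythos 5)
The two displayed identities that you yourself call the technical heart of the argument are false, and this is a genuine gap, not a bookkeeping issue. Test them on $\Gamma=\langle 3,4,5\rangle$, $k=2$, so $\Gamma_2=\langle 3,8,10\rangle$, $\mathrm{PF}(\Gamma)=\{1,2\}$, $\mathrm{PF}(\Gamma_2)=\{5,7\}$, $\Omega_{\Gamma}=\{1,2,4,5,6,\dots\}$ and $\Omega_{\Gamma_2}=(5+\Gamma_2)\cup(7+\Gamma_2)$. Your formula for $\Omega_{\Gamma_k}$ contains $0\cdot n_1+k\cdot m=2m$ for every $m\in\Omega_{\Gamma}$, e.g.\ $2$, $4$ and $12$; but $12\notin\Omega_{\Gamma_2}$ (since $12-5=7\notin\Gamma_2$ and $12-7=5\notin\Gamma_2$), and $2,4$ are smaller than the least pseudo-Frobenius number. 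Dually, $1\in\Omega_{\Gamma_2}^{-1}$ (one checks $1+\Omega_{\Gamma_2}\subseteq\Gamma_2$), yet your formula for $\Omega_{\Gamma_k}^{-1}$ only produces integers $\geq 4$. The source of the error is the carry: writing an element of $\nu+\Gamma_k$ with $\nu=km+(k-1)n_1$, $m\in\mathrm{PF}(\Gamma)$, in the unique form $an_1+k\mu$, $0\le a\le k-1$, one gets $\mu\in\Omega_{\Gamma}$ only when $a=k-1$, and $\mu\in\Omega_{\Gamma}+n_1$ when $a\le k-2$; so $\Omega_{\Gamma_k}$ does \emph{not} factor as $\{0,\dots,k-1\}\times\Omega_{\Gamma}$, and symmetrically $\Omega_{\Gamma_k}^{-1}$ is strictly larger than your product set. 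Your errors in the two factors happen to be complementary, so the final sum $\Omega_{\Gamma_k}+\Omega_{\Gamma_k}^{-1}=k\,\mathrm{tr}(\Gamma)+\Gamma_k$ is the correct answer, but as written you have not proved it: the collapse argument at the end is only derived from the false product descriptions. (Your preliminary structural facts are fine: the unique representation $\ell=an_1+km$, the membership criterion, $\mathrm{F}(\Gamma_k)=k\mathrm{F}(\Gamma)+(k-1)n_1$ and $\mathrm{PF}(\Gamma_k)=\{km+(k-1)n_1\}$ are all correct.)

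To repair the combinatorial route you would have to work with the correct, shifted descriptions, i.e.\ $\Omega_{\Gamma_k}=\{(k-1)n_1+k\omega:\omega\in\Omega_{\Gamma}\}\cup\{jn_1+k(\omega+n_1):0\le j\le k-2,\ \omega\in\Omega_{\Gamma}\}$ and the analogous (shift-compensating) description of $\Omega_{\Gamma_k}^{-1}$, and then verify that the sum is the $\Gamma_k$-ideal generated by $k\,\mathrm{tr}(\Gamma)$; that is exactly the delicate part you have skipped. Note that the ``alternative'' you mention in passing is in fact the paper's proof: one takes a minimal $\Gamma$-graded free resolution of $\mathbb{K}[\Gamma]$, applies the faithfully flat base change $x_1\mapsto y_1^k$, $x_i\mapsto y_i$ ($i>1$) of Numata/\c{S}ahin to get a minimal resolution of $\mathbb{K}[\Gamma_k]$, dualizes to identify $\omega_{\mathbb{K}[\Gamma_k]}$ with the extension of $\omega_{\mathbb{K}[\Gamma]}$ (up to shift, which does not affect the trace), and concludes that the trace ideal scales by $k$. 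That route avoids the carry bookkeeping entirely and is the one I would recommend unless you carefully redo the two set identities.
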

	\begin{proof}
		Let $$
		\mathbb{F}: 0 \rightarrow F_p \rightarrow F_{p-1} \rightarrow \dots \rightarrow F_0 \rightarrow A/I(\Gamma) \rightarrow 0
		$$
		 be a minimal free $A$-resolution of $A/I(\Gamma)$.
		 Then by \cite[Corollary 3.9]{Bruns-Herzog}, the dual complex $\mathbb{F}^{\ast}=\mathrm{Hom}_A(\mathbb{\mathbb{F}'},A)$ is a minimal free resolution of $\omega_{\mathbb{K}[\Gamma]}$. 
		 
		 As indicated in \cite{Var}, a minimal $\Gamma_k$-graded free resolution $\mathbb{F}_k$ of $\mathbb{K}[\Gamma_k]$ is obtained from a
		 minimal $\Gamma$-graded free resolution of $\mathbb{K}[\Gamma]$ via the faithfully flat extension $f : A \rightarrow B$,
		 defined by sending $x_1 \rightarrow y_1^k$ 
		 and $x_i \rightarrow y_i$ for all $i > 1$, and the dual complex $\mathbb{F}_k^{\ast} =\mathrm{Hom}_B(\mathbb{F}_k,B)=\mathrm{Hom}_B(\mathbb{F}\otimes_A B,B)\cong \mathrm{Hom}_A(\mathbb{F},A)\otimes_A B \cong \mathbb{F}^{\ast} \otimes_A B $.  Hence $\omega_{\mathbb{K}[\Gamma_k]}=k\omega_{\mathbb{K}[\Gamma]}$. It is clear that any $\phi \in \mathrm{Hom}_B(\omega_{\mathbb{K}[\Gamma_k]},B)$ can be obtained by extending the map $\psi \in \mathrm{Hom}_A(\omega_{\mathbb{K}[\Gamma]},A)$ via $f$. Hence $\mathrm{tr}_B(\omega_{\mathbb{K}[\Gamma_k]})=\sum_{\phi\in \mathrm{Hom}_B(\omega_{\mathbb{K}[\Gamma_k]},B)}\phi(\omega_{\mathbb{K}[\Gamma_k]})=k\sum_{\psi\in \mathrm{Hom}_A(\omega_{\mathbb{K}[\Gamma],A)}}\psi(\omega_{\mathbb{K}[\Gamma]})=k \cdot \mathrm{tr}_{A}(\omega_{\mathbb{K}[\Gamma])}$.  
	\end{proof}
	
	\begin{corollary}\label{Lift-Res}
		$\mathrm{res}(\Gamma_k)=k \cdot \mathrm{res}(\Gamma)$.
	\end{corollary}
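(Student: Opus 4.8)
The plan is to deduce the residue identity from Theorem~\ref{Lift-Can} by the same flat base change that proves it, and then pass from ideals to lengths. Recall that for any numerical semigroup $H$ the residue $\mathrm{res}(H)=\dim_{\mathbb{K}}\mathbb{K}[H]/\mathrm{tr}(\omega_{\mathbb{K}[H]})$ is finite, since $\mathrm{tr}(\omega_{\mathbb{K}[H]})$ cuts out the non-Gorenstein locus of the one-dimensional generically Gorenstein Cohen--Macaulay domain $\mathbb{K}[H]$. So it suffices to compare $\dim_{\mathbb{K}}\mathbb{K}[\Gamma_k]/\mathrm{tr}(\omega_{\mathbb{K}[\Gamma_k]})$ with $\dim_{\mathbb{K}}\mathbb{K}[\Gamma]/\mathrm{tr}(\omega_{\mathbb{K}[\Gamma]})$.

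First I would record, from the proof of Theorem~\ref{Lift-Can}, the ingredients that do the work: the faithfully flat map $f\colon A\to B$, $x_1\mapsto y_1^{k}$ and $x_i\mapsto y_i$ for $i>1$, presents $B$ as a free $A$-module of rank $k$ with basis $1,y_1,\dots,y_1^{k-1}$; one has $\mathbb{K}[\Gamma_k]\cong\mathbb{K}[\Gamma]\otimes_A B$; and $\mathrm{tr}_B(\omega_{\mathbb{K}[\Gamma_k]})=\mathrm{tr}_A(\omega_{\mathbb{K}[\Gamma]})B$, which is just the ideal-theoretic form of the equality $\mathrm{tr}(\Gamma_k)=k\cdot\mathrm{tr}(\Gamma)$ proved there. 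Now, since $f$ is flat, tensoring the short exact sequence $0\to\mathrm{tr}_A(\omega_{\mathbb{K}[\Gamma]})\to\mathbb{K}[\Gamma]\to\mathbb{K}[\Gamma]/\mathrm{tr}_A(\omega_{\mathbb{K}[\Gamma]})\to0$ over $A$ with $B$ keeps it exact, and the image of $\mathrm{tr}_A(\omega_{\mathbb{K}[\Gamma]})\otimes_A B$ in $\mathbb{K}[\Gamma_k]$ is exactly $\mathrm{tr}_A(\omega_{\mathbb{K}[\Gamma]})B=\mathrm{tr}_B(\omega_{\mathbb{K}[\Gamma_k]})$; hence $\mathbb{K}[\Gamma_k]/\mathrm{tr}_B(\omega_{\mathbb{K}[\Gamma_k]})\cong\bigl(\mathbb{K}[\Gamma]/\mathrm{tr}_A(\omega_{\mathbb{K}[\Gamma]})\bigr)\otimes_A B$. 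As $B\cong A^{\oplus k}$ as an $A$-module, the right-hand side is $\bigl(\mathbb{K}[\Gamma]/\mathrm{tr}_A(\omega_{\mathbb{K}[\Gamma]})\bigr)^{\oplus k}$ as a $\mathbb{K}$-vector space, and taking dimensions yields $\mathrm{res}(\Gamma_k)=k\cdot\mathrm{res}(\Gamma)$.

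The only step that needs care is the identification $\mathbb{K}[\Gamma_k]\cong\mathbb{K}[\Gamma]\otimes_A B$, equivalently $I(\Gamma_k)=I(\Gamma)B$, i.e. that the lifting creates no relations beyond the extended ones; this is precisely the faithfully flat statement of \cite{Var} already invoked in the proof of Theorem~\ref{Lift-Can}, so nothing new is required. If a hands-on argument in the style of the proof of Theorem~\ref{Gluing-Trace} is preferred, one can instead use the bijection $\Gamma_k=\{\,jn_1+ks:0\le j<k,\ s\in\Gamma\,\}$ (each element of $\Gamma_k$ is so written uniquely, using $\gcd(k,n_1)=1$), observe via Theorem~\ref{Lift-Can} that under this bijection $\mathrm{tr}(\Gamma_k)$ corresponds to $\{\,jn_1+ks:0\le j<k,\ s\in\mathrm{tr}(\Gamma)\,\}$, and conclude that $\Gamma_k\setminus\mathrm{tr}(\Gamma_k)$ is the disjoint union of the $k$ translated copies $jn_1+k(\Gamma\setminus\mathrm{tr}(\Gamma))$, so that $\mathrm{res}(\Gamma_k)=|\Gamma_k\setminus\mathrm{tr}(\Gamma_k)|=k\,|\Gamma\setminus\mathrm{tr}(\Gamma)|=k\,\mathrm{res}(\Gamma)$; the content is identical.
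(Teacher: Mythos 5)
Your proposal is correct, and both of your routes differ from (and are more careful than) the paper's own argument. The paper proves the corollary by exhibiting a single monomial map $\eta(kt^{h})=t^{kh}$ from ``$k\cdot\mathbb{K}[\Gamma]/\mathrm{tr}(\omega_{\mathbb{K}[\Gamma]})$'' to $\mathbb{K}[\Gamma_k]/\mathrm{tr}(\omega_{\mathbb{K}[\Gamma_k]})$ and asserting, via Theorem \ref{Lift-Can}, that it is a bijection so that the dimension gets multiplied by $k$; as written that map only reaches residue classes of monomials whose exponents are divisible by $k$, so the factor $k$ is never exhibited explicitly. Your first argument replaces this by genuine flat base change: since $B$ is free of rank $k$ over $A$ with basis $1,y_1,\dots,y_1^{k-1}$, tensoring $0\to\mathrm{tr}(\omega_{\mathbb{K}[\Gamma]})\to\mathbb{K}[\Gamma]\to\mathbb{K}[\Gamma]/\mathrm{tr}(\omega_{\mathbb{K}[\Gamma]})\to 0$ with $B$ gives $\mathbb{K}[\Gamma_k]/\mathrm{tr}(\omega_{\mathbb{K}[\Gamma_k]})\cong\bigl(\mathbb{K}[\Gamma]/\mathrm{tr}(\omega_{\mathbb{K}[\Gamma]})\bigr)^{\oplus k}$, which yields the count structurally and uses only the two facts ($\mathbb{K}[\Gamma_k]\cong\mathbb{K}[\Gamma]\otimes_A B$ and the extension of the trace ideal) that the paper itself invokes in proving Theorem \ref{Lift-Can}. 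Your second argument, based on the unique decomposition $\Gamma_k=\{jn_1+ks: 0\le j<k,\ s\in\Gamma\}$ together with the observation that $\mathrm{tr}(\Gamma_k)$ is the $\Gamma_k$-ideal generated by $k\cdot\mathrm{tr}(\Gamma)$, is closest in spirit to the paper's intended counting and in fact supplies exactly what the paper's sketch leaves implicit: the $k$ disjoint cosets $jn_1+k\bigl(\Gamma\setminus\mathrm{tr}(\Gamma)\bigr)$, $0\le j<k$, that produce the factor $k$. You are also right to read Theorem \ref{Lift-Can} ideal-theoretically (the trace of $\Gamma_k$ is the ideal generated by $k\,\mathrm{tr}(\Gamma)$, equivalently the extension of $\mathrm{tr}(\omega_{\mathbb{K}[\Gamma]})$ along $\mathbb{K}[\Gamma]\to\mathbb{K}[\Gamma_k]$) rather than as a literal set equality, since $\mathrm{tr}(\Gamma_k)$ contains elements not divisible by $k$; this is the interpretation under which both your arguments, and the dimension count the paper aims for, go through.
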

	\begin{proof}
		Note that $\mathbb{K}[\Gamma]/\mathrm{tr}(\omega_{\mathbb{K}[\Gamma]})$ is a finite dimensional vector space with a $\mathbb{K}$-basis given by $\{t^\gamma:\gamma \in \Gamma\setminus \mathrm{tr}(\Gamma)\}$. Consider a map $\eta: k \cdot \mathbb{K}[\Gamma]/\mathrm{tr}(\omega_{\mathbb{K}[\Gamma]}) \rightarrow \mathbb{K}[\Gamma_k]/\mathrm{tr}(\omega_{\mathbb{K}[\Gamma_k]})$ defined by $\eta(kt^h)=t^{kh}$. Due to Theorem \ref{Lift-Can}, $\eta$ is well defined and it is easy to check that $\eta$ is bijective homomomorphism. Hence
		$\dim_{\mathbb{K}}(\mathbb{K}[\Gamma_k]/\mathrm{tr}(\omega_{\mathbb{K}[\Gamma_k]}))=k \cdot \dim_{\mathbb{K}}(\mathbb{K}[\Gamma]/\mathrm{tr}(\omega_{\mathbb{K}[\Gamma]}))$ and  we have the required result.
	\end{proof}
	
	One direct consequence of Corollary \ref{Lift-Res} is as following
	
	\begin{corollary}
		If $\Gamma$ is nearly Gorenstein semigroup then $\Gamma_k$ be never nearly Gorenstein for $k\geq 2$.
	\end{corollary}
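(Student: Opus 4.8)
The plan is to read this off directly from Corollary~\ref{Lift-Res} together with the standard residue characterization of near-Gorensteinness that was already used in the gluing corollary above. Recall that for a numerical semigroup $H$ one has $\mathrm{res}(H)=l_{\mathbb{K}}\big(\mathbb{K}[H]/\mathrm{tr}(\omega_{\mathbb{K}[H]})\big)$, that $\mathbb{K}[H]$ is nearly Gorenstein exactly when $\mathfrak{m}\subseteq \mathrm{tr}(\omega_{\mathbb{K}[H]})$, i.e. exactly when $\mathrm{res}(H)\leq 1$, and that $\mathrm{res}(H)=0$ precisely when $H$ is symmetric (Gorenstein). So I would first observe that, since $\Gamma$ is nearly Gorenstein but not Gorenstein, necessarily $\mathrm{res}(\Gamma)=1$. (If $\Gamma$ were itself Gorenstein the lifting would again be Gorenstein, by the Betti-number-preserving nature of the faithfully flat extension $f$, so the statement is to be understood in the non-symmetric case.)

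Then, by Corollary~\ref{Lift-Res}, $\mathrm{res}(\Gamma_k)=k\cdot\mathrm{res}(\Gamma)=k$. For $k\geq 2$ this yields $\mathrm{res}(\Gamma_k)=k\geq 2>1$, hence $\mathfrak{m}\not\subseteq \mathrm{tr}(\omega_{\mathbb{K}[\Gamma_k]})$, and $\Gamma_k$ is not nearly Gorenstein. There is essentially no obstacle here: all the content sits in Theorem~\ref{Lift-Can}/Corollary~\ref{Lift-Res}, whose proof transports the dualized minimal free resolution along the faithfully flat map $f\colon A\to B$ sending $x_1\mapsto y_1^{\,k}$ and $x_i\mapsto y_i$ for $i>1$; the present statement is merely the remark that multiplying the residue by $k\geq 2$ pushes it strictly past the threshold $1$ that characterizes the nearly Gorenstein property.
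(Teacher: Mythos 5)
Your argument is exactly the paper's: the corollary is presented there as a direct consequence of Corollary~\ref{Lift-Res} together with the characterization that $\mathbb{K}[H]$ is nearly Gorenstein if and only if $\mathrm{res}(H)\leq 1$, which are precisely the two ingredients you combine (and the same ones the paper cites for the gluing analogue). Your caveat is also well taken: if $\Gamma$ is Gorenstein then $\mathrm{res}(\Gamma_k)=k\cdot 0=0$, so $\Gamma_k$ is again Gorenstein and hence nearly Gorenstein, meaning the statement implicitly assumes $\Gamma$ is not symmetric (so that $\mathrm{res}(\Gamma)=1$), a restriction the paper leaves unstated.
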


	Let $C= t^{F(\Gamma)}\omega_{R}=\sum_{\alpha \in \mathrm{PF}(S)}Rt^{F(\Gamma)-\alpha}$. Then we obtained that $R\subseteq C \subseteq \overline{R}=\mathbb{K}[t]$, where $\overline{R}$ denotes the integral closure of $R$. By  \cite{Sahin}, we know that $\mathrm{PF}(\Gamma_k)=\{kf+(k-1)n_1 \mid f \in \mathrm{PF}(\Gamma)\}$ and $F(\Gamma_k)=kF(\Gamma)+(k-1)n_1$, hence $C_k=\sum_{\alpha \in \mathrm{PF}(\Gamma)}R_kt^{k(F(\Gamma)-\alpha)}$ and $R_k \subseteq C_k \subseteq \overline{R_k}=\mathbb{K}[t]$. Now consider  a module homomorphism  $\psi: k\frac{C}{R} \rightarrow \frac{C_k}{R_k}$ defined by $\psi(k(c+R))=kc+R_k$. It is clear that $\psi$ is a module isomorphism and hence $l_{R_{k}}(\frac{C_k}{R_k})=kl_R(\frac{C}{R})$. By \cite[Lemma 2.2]{Upper bound}, $g(\Gamma_k)-n(\Gamma_k)=l_{R_{k}}(\frac{C_k}{R_k})=k l_R(\frac{C}{R})=k(g(\Gamma)-n(\Gamma))$. Now if $\Gamma$ satisfies Question \ref{Ques}, then $\mathrm{res}(\Gamma_k)=k\mathrm{res}(\Gamma)\leq k(g(\Gamma)-n(\Gamma))=g(\Gamma_k)-n(\Gamma_k)$, hence $\Gamma_k$ also satisfies Question \ref{Ques}.

\medspace

	\bibliographystyle{amsalpha}

\end{document}